\newtheorem{theorem}{Theorem}[section]
\newtheorem{lemma}[theorem]{Lemma}
\newtheorem{proposition}[theorem]{Proposition}
\newtheorem{corollary}[theorem]{Corollary}
\newtheorem{remark}[theorem]{Remark}
\newcommand{\Z}{\mathbb{Z}}
\renewcommand{\ker}{\operatorname{Ker}}
\newcommand{\id}{\operatorname{id}}
\newcommand{\sym}{\operatorname{Sym}}
\newcommand{\aut}{\operatorname{Aut}}
\newcommand{\ret}{\operatorname{Ret}}
\newcommand{\soc}{\operatorname{Soc}}
\newenvironment{proof}{\par\noindent{\bf Proof.}}{$\qed$\par\bigskip}
\newcommand{\qed}{\enspace\vrule  height6pt  width4pt  depth2pt}
\begin{document}

\title{A family of irretractable square-free solutions of the Yang-Baxter
equation}
\author{D. Bachiller \and F. Ced\'o \and E. Jespers \and J. Okni\'{n}ski}
\date{}

\maketitle

\begin{abstract} A new family of non-degenerate involutive
set-theoretic solutions of the Yang-Baxter equation is constructed.
All these solutions are strong twisted unions of multipermutation
solutions of multipermutation level at most two. A large subfamily
consists of irretractable and square-free solutions. This subfamily
includes a recent example of Vendramin \cite[Example~3.9]{Ven}, who
first gave a counterexample to Gateva-Ivanova's Strong Conjecture
\cite[Strong Conjecture 2.28(I)]{GI}. All the solutions in this
subfamily are new counterexamples to Gateva-Ivanova's Strong
Conjecture
 and also they answer a question
of Cameron and Gateva-Ivanova \cite[Open Questions~6.13
(II)(4)]{GIC}. It is proved that the natural left brace structure on
the permutation group of the solutions in this family has trivial
socle. Properties of the permutation group and of the structure
group associated to these solutions are also investigated. In
particular, it is proved that the structure groups of finite
solutions in this subfamily are not poly-(infinite cyclic) groups.
\end{abstract}

\noindent 2010 MSC: 16T25, 20E22, 20F16.

\noindent Keywords: Yang-Baxter equation, set-theoretic solution,
brace.

\section{Introduction}
Since its appearance in a paper of Yang \cite{Yang}, the Yang-Baxter
equation has become an important equation in mathematical physics
and also in quantum group theory. It has stimulated a lot of
activity and led to a diversity of new methods in several related
areas of algebra. Recall that a set-theoretic solution of the
Yang-Baxter equation on a non-empty set $X$  is a bijective map
$r\colon X\times X \longrightarrow X\times X$ such that
    $$r_{12}r_{23}r_{12} = r_{23}r_{12}r_{23},$$
where $r_{ij}$ denotes the map $X\times X \times X \longrightarrow
X\times X \times X$ acting as $r$ on the $(i,j)$ components and as
the identity on the remaining component.  Drinfeld, in
\cite{drinfeld} suggested that is of interest to study
set-theoretic solutions of the quantum Yang-Baxter equation
$$R_{12}R_{13}R_{23}=R_{23}R_{13}R_{12}.$$
It is known that if $\tau\colon X\times X\longrightarrow X\times
X$ is the twist map $\tau(x,y)=(y,x)$, then a map $r\colon X\times
X \longrightarrow X\times X$ is a set-theoretic solution of the
Yang-Baxter equation if and only if $R=\tau\circ r$ is a
set-theoretic solution of the quantum Yang-Baxter equation.

In recent years, a special class of solutions of this type, the
non-degenerate involutive solutions, has received a lot of attention
\cite{CJO, CJO2, CJR, ESS, GI, GIC, gat-maj, GIVdB, JO, JObook,
Rump1, Rump}. Also, this class of solutions has connections with
many topics in mathematics,  such as semigroups of $I$-type and
Bieberbach groups \cite{GIVdB}, bijective $1$-cocycles \cite{ESS},
radical rings \cite{Rump}, triply factorized groups \cite{sysak},
Hopf algebras \cite{EtingofGelaki}, regular subgroups of the
holomorf and Hopf-Galois extensions \cite{CR, FCC}, groups of
central type \cite{BenDavid, BDG}.

To study involutive non-degenerate set-theoretic solutions of the
Yang-Baxter equation, Rump introduced in \cite{Rump} a new algebraic
structure, called a brace. Recall that a left brace is a set $B$
with two binary operations, a sum $+$ and a product $\cdot$, such
that $(B,+)$ is an abelian group (the additive group of $B$),
$(B,\cdot)$ is a group (the multiplicative group of $B$) and
$$a\cdot (b+c)+a=a\cdot b+a\cdot c,$$
for all $a,b,c\in B$. Rump has begun to develop the theory of braces
in a series of papers \cite{Rump3, Rump2, Rump4, Rump5, Rump6,
Rump7}. The usefulness of this algebraic structure to solve problems
about this type of solutions of the Yang-Baxter equation is
confirmed by the results proven in \cite{CJO2}.  Even more, in
\cite{BCJ}, the classification of involutive non-degenerate
set-theoretic solutions of the Yang-Baxter equation is reduced to
the classification of left braces.  Rump \cite{Rump2} and Bachiller
\cite{B} classified some special classes of left braces. These
results indicate that the classification of arbitrary  left braces
(even in the finite case) seems to be a very difficult problem. If
$B$ is a finite left brace, then it is known that the multiplicative
group of $B$ is solvable \cite{ESS}. Using some preliminary ideas of
Rump, stated in \cite{Rump7}, and developing new ideas on left
braces,  it has recently been proven in \cite{B2} that there exist
finite $p$-groups which are not multiplicative groups of finite left
braces. This   answers  in the negative a question which appears
implicitly in \cite{ESS} and explicitly in \cite{CJR}. It is an open
problem to characterize the finite solvable groups which are
multiplicative groups of left braces.

A possible strategy to classify finite left braces is the following.
First, construct and classify the finite simple left braces. Second,
develop the theory of extensions of finite left braces.

Recall that an ideal of a left brace $B$ is a normal subgroup $I$ of
the multiplicative group of $B$ such that
$$ba-b\in I,$$
for all $b\in B$ and all $a\in I$. The socle of a left brace $B$ is
$$\soc(B)=\{ a\in B\mid ab=a+b\mbox{ for all }b\in B\}.$$
It is an ideal of $B$ (see \cite[page 107]{CJO2}). One says that the
left brace $B$ is simple if $B\neq \{ 1\}$ and $\{ 1\}$ and $B$ are
the only ideals of $B$. Rump, in \cite{Rump}, has shown that the
only simple finite nilpotent left braces (that is, the
multiplicative group of $B$ is nilpotent) are the cyclic groups
$\Z/(p)$, with $p$ a prime, and it turns out that the multiplication
of the brace is equal to the sum.   Recently Bachiller \cite{B3} has
developed a method to construct finite non-nilpotent simple left
braces and has given some families of such braces. To apply this
method of constructing new finite simple left braces it is important
to discover  new families of finite left braces with trivial socle.
Note that, obviously, any finite non-nilpotent simple left brace
should have trivial socle. Some families of finite left braces with
trivial socle have been given in \cite{CR,H}. The natural structure
of a left brace on the permutation group of a finite irretractable
solution yields a class of finite braces with trivial socle (see
Lemma~\ref{socle} below). Therefore, to find new families of finite
irretractable solutions, or, more general, new families of finite
left braces with trivial socle is of interest from the point of view
of the classification of finite left braces.

Another key ingredient of the classification would be the theory of
extensions of left braces. However, very little is known about this
(see \cite{B3,BDG2}).

Vendramin in \cite[Example~3.9]{Ven} gives a counterexample to a
conjecture of Gateva-Ivanova \cite[Strong Conjecture 2.28(I)]{GI},
see Section~\ref{constr}, by constructing  an irretractable
square-free involutive non-degenerate set-theoretic solution of the
Yang-Baxter equation $(X,r)$ with $|X|=8$. It is remarkable that
among the 2471 square-free non-degenerate involutive set-theoretic
solutions on a set $X$ with $|X|\leq 8$ this  is the only
counterexample to the Gateva-Ivanova conjecture (see Remark 3.11 in
\cite{Ven}). Furthermore, studying this example of Vendramin one can
check that it is a strong twisted union of two multipermutation
solutions of multipermutation level two. Thus this yields a negative
answer to a question posed by Cameron and Gateva-Ivanova \cite[Open
Questions~6.13 (II)(4)]{GIC}, although Vendramin did not notice this
fact in \cite{Ven}.

In this paper we construct a large family of irretractable
square-free involutive non-degenerate  solutions of the Yang-Baxter
equation that includes the example of Vendramin. Thus these
solutions are new counterexamples to \cite[Strong Conjecture
2.28(I)]{GI}. These solutions are strong twisted unions of
multipermutation solutions of multipermutation level $2$,
corresponding to their orbits under the action of its permutation
group. Hence, these solutions also yield a negative answer to a
question posed by Cameron and  Gateva-Ivanova in \cite[Open
Questions~6.13 (II)(4)]{GIC}. The natural structure of left brace on
the permutation group of these solutions provides a new family of
left braces with trivial socle. We also study another structure
associated to a solution of the Yang-Baxter equation, the so called
structure group (which is a solvable Bieberbach group  if the
solution is finite) introduced by Etingof, Schedler and Soloviev
\cite{ESS}. In particular, we prove that these groups are not
poly-(infinite cyclic). This is in contrast with the case of
multipermutation solutions, whose structure groups are always
poly-(infinite cyclic).

\section{Preliminary results}  \label{prelim}

We begin by recalling the necessary terminology and notation. Let
$X$ be a non-empty set and  $r\colon X\times X\longrightarrow
X\times X$ a map, and write $r(x,y)=(\sigma_x(y),\gamma_y(x))$.
Recall that $(X,r)$ is  said to be   a non-degenerate involutive
set-theoretic solution of the Yang-Baxter equation if and only if
the following  properties hold.
\begin{itemize}
\item[(1)] $r^2=\id_{X^2}$  ($r$ is involutive).
\item[(2)] $\sigma_x,\gamma_x\in \sym(X)$, for all $x\in X$  ($r$ is non-degenerate).
\item[(3)] $r_{12}r_{23}r_{12}=r_{23}r_{12}r_{23}$.
\end{itemize}

It is easy to check that (1) and (2) imply
$\gamma_y(x)=\sigma^{-1}_{\sigma_x(y)}(x)$, for all $x,y\in X$.
\bigskip

\noindent {\bf Convention.} By a solution of the YBE we mean a
non-degenerate involutive set-theoretic solution of the Yang-Baxter
equation.
\bigskip

A solution $(X,r)$ of the YBE is called square-free if
$r(x,x)=(x,x)$ for all $x\in X$.  If $r(x,y)=(y,x)$, i.e. if all
$\sigma_{x}=\id_{X}$, then $r$ is called the trivial solution.

The structure group of a solution $(X,r)$ of the YBE is the group
$G(X,r)=\langle X\mid xy=zt$ whenever $r(x,y)=(z,t)\rangle$. The
permutation group of $(X,r)$, denoted $\mathcal{G}(X,r)$,  is the
subgroup of the symmetric group $\sym(X)$  on  $X$ generated by $\{
\sigma_x\mid x\in X\}$.

Etingof, Schedler and Soloviev in \cite{ESS} proposed the following
interesting operator for  studying the structure group $G(X,r)$ and
to classify  solutions of the YBE. We recall its definition. Given a
solution $(X,r)$ of the YBE, with
$r(x,y)=(\sigma_x(y),\gamma_y(x))$, define the  equivalence relation
$\sim$ on $X$ by $$x\sim y\mbox{ if and only if }
\sigma_x=\sigma_y.$$ We denote by $\bar x$ the $\sim$-class of $x\in
X$. The retraction $\ret(X,r)$ of $(X,r)$ is the solution $(\bar
X,\bar r)$, where $\bar X=X/\sim$ and $\bar r(\bar x,\bar
y)=(\overline{\sigma_x(y)},\overline{\gamma_y(x)})$. A solution
$(X,r)$ of the YBE is said to be  a multipermutation solution if
there exists a positive integer $n$ such that $\ret^n(X,r)$ is a
solution on a set of cardinality $1$.  The multipermutation level of
a multipermutation solution $(X,r)$ of the YBE is the smallest
positive integer $n$ such that $\ret^n(X,r)$ is a solution on a set
of cardinality $1$.  One says that $(X,r)$ is irretractable if
$\ret(X,r)=(X,r)$.

Rump in \cite{Rump} introduced  a new algebraic structure, called a
left brace. This allows another possible strategy to attack the
problem of constructing and classifying the solutions of the YBE.
Recall that a left brace is a set $B$ with two operations, an
addition $+$ and a multiplication $\cdot$, such that $(B,+)$ is an
abelian group, $(B,\cdot)$ is a group and
$$a\cdot(b+c)+a=a\cdot b+a\cdot c,$$
for all $a,b,c\in B$. It follows that $a\cdot (b-c)=a\cdot b-a\cdot
c+a$, for all $a,b,c\in B$. For $a\in B$, we denote by $\lambda_a$
the map $B\longrightarrow B$ defined by $\lambda_a(b)=a\cdot b-a$,
for all $b\in B$. In fact $\lambda_a\in \aut(B,+)$, and
$\lambda\colon (B,\cdot)\longrightarrow \aut(B,+)$, defined by
$\lambda(a)=\lambda_a$, is a  group homomorphism (see \cite{CJO2}).
The socle, $\soc(B)$, of a left brace $B$ is defined as
$$\soc(B)=\{ a\in B\mid \lambda_a=\id_B\}.$$
It is an  ideal of $B$, i.e. a normal subgroup of $(B,\cdot )$ that
is invariant under all maps $\lambda_a$. (For the definitions of a
homomorphism of left braces, of a right brace and of related notions
we refer to \cite{CJO2}). Note that the maps $\lambda_a$ give a
useful link between the two operations in a left brace $B$, that is
$$a\cdot b=a+\lambda_a(b)\quad\mbox{ and }\quad a+b=a\cdot \lambda^{-1}_a(b),$$
for all $a,b\in B$. By a subgroup of a left brace $B$ we mean a
subgroup of the multiplicative group of $B$.

Given a solution $(X,r)$ of the YBE, the groups $G(X,r)$ and
$\mathcal{G}(X,r)$  each have  a natural left brace  structure. The
additive group of $G(X,r)$ is the free abelian group with basis $X$
and $\lambda_x(y)=\sigma_x(y)$, for all $x,y\in X\subseteq G(X,r)$.
Furthermore, the map $x\mapsto \sigma_x$ extends to an onto
(multiplicative) group homomorphism
$$\phi\colon G(X,r)\longrightarrow
\mathcal{G}(X,r)$$  and $\ker(\phi)=\soc(G(X,r))$ is an ideal of the
left brace $G(X,r)$. Hence
$$\mathcal{G}(X,r)\cong G(X,r)/\soc(G(X,r))$$
has a natural induced left brace structure (see also
\cite[Section~3]{Ta} and \cite[Sections~3 and~5]{GI15}). It follows
that $\phi : G(X,r)\longrightarrow \mathcal{G}(X,r)$ is a
homomorphism of left braces and, for every $g\in G(X,r)$, the map
$\phi(g)$ is the restriction of $\lambda_g$ to $X$. In particular,
$\phi(a+b)=\phi(a)+\phi(b)$, where the latter is the sum taken in
the brace $\mathcal{G}(X,r)$.

\begin{lemma}\label{socle}
Let $(X,r)$ be a solution of the YBE such that $\ret(X,r)=(X,r)$.
Then $\soc(\mathcal{G}(X,r))=\{ 1\}$.
\end{lemma}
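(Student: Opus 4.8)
The plan is to use the surjective homomorphism of left braces $\phi\colon G(X,r)\to\mathcal{G}(X,r)$, together with the fact recalled above that, for $g\in G(X,r)$, the permutation $\phi(g)$ is the restriction of the automorphism $\lambda_g$ to $X$.

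\emph{Step 1: the $\lambda$-action of $\mathcal{G}(X,r)$ on the generators $\sigma_x$.} I claim that
$$\lambda_h(\sigma_x)=\sigma_{h(x)}\qquad\text{for all }h\in\mathcal{G}(X,r)\text{ and }x\in X,$$
where $h(x)$ denotes the image of $x$ under the permutation $h\in\sym(X)$. To prove this, write $h=\phi(g)$ with $g\in G(X,r)$ (possible since $\phi$ is onto). Because $\phi$ is a homomorphism of left braces, $\phi(\lambda_g(z))=\lambda_{\phi(g)}(\phi(z))$ for every $z\in G(X,r)$; applying this with $z=x\in X$ gives $\lambda_h(\sigma_x)=\lambda_{\phi(g)}(\phi(x))=\phi(\lambda_g(x))$. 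Now $\lambda_g(x)=\phi(g)(x)=h(x)$ belongs to $X$, since $\phi(g)$ is the restriction of $\lambda_g$ to $X$, and $\phi$ sends an element $z\in X$ to $\sigma_z$. Hence $\phi(\lambda_g(x))=\sigma_{h(x)}$, which proves the claim.

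\emph{Step 2: conclusion.} Let $h\in\soc(\mathcal{G}(X,r))$, i.e. $\lambda_h=\id$ on $\mathcal{G}(X,r)$. Evaluating at $\sigma_x$ and using Step~1, $\sigma_x=\lambda_h(\sigma_x)=\sigma_{h(x)}$ for every $x\in X$; hence $h(x)\sim x$ for all $x\in X$. Since $(X,r)$ is irretractable, the natural map $(X,r)\to\ret(X,r)$ is an isomorphism, so $\sim$ is the equality relation on $X$ (equivalently, $x\mapsto\sigma_x$ is injective). Therefore $h(x)=x$ for all $x\in X$, that is $h=\id_X=1$ in $\mathcal{G}(X,r)$, and $\soc(\mathcal{G}(X,r))=\{1\}$.

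The main, and rather mild, obstacle is to get Step~1 right: one needs the naturality identity $\phi\circ\lambda_g=\lambda_{\phi(g)}\circ\phi$, which is precisely the statement that $\phi$ is a homomorphism of left braces, and the observation that $\lambda_g$ maps $X$ into $X$ (acting there as $\phi(g)$), so that the symbol $\sigma_{h(x)}$ is meaningful. Once Step~1 is available, Step~2 is immediate, being just a translation between the additive and multiplicative operations of the brace $\mathcal{G}(X,r)$.
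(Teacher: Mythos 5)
Your proof is correct and follows essentially the same route as the paper: both arguments lift the computation to $G(X,r)$ via the brace homomorphism $\phi$ to establish that $\lambda_h(\sigma_x)=\sigma_{h(x)}$ (the paper does this by expanding $g\sigma_z-g$ over an explicit word $\sigma_{x_1}^{\varepsilon_1}\cdots\sigma_{x_n}^{\varepsilon_n}$, you by invoking the naturality $\phi\circ\lambda_g=\lambda_{\phi(g)}\circ\phi$ directly), and then both conclude from the injectivity of $x\mapsto\sigma_x$ forced by $\ret(X,r)=(X,r)$. Your packaging is a bit more abstract but the underlying argument is the same.
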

\begin{proof}
We have that $r(x,y)=(\sigma_x(y),\sigma^{-1}_{\sigma_x(y)}(x))$,
for some $\sigma_x\in \sym(X)$.  Let $g\in \soc(\mathcal{G}(X,r))$.
Then there exist $x_1,\dots ,x_n\in X$ and
$\varepsilon_1,\dots,\varepsilon_n\in\{ -1,1\}$ such that
$g=\sigma_{x_1}^{\varepsilon_1}\cdots\sigma_{x_n}^{\varepsilon_n}$
and $gh-g=h$ for all $h\in \mathcal{G}(X,r)$. In particular, for all
$z\in X$  we  have
\begin{eqnarray*}
\sigma_z&=&g\sigma_z-g\\
&=&\sigma_{x_1}^{\varepsilon_1}\cdots\sigma_{x_n}^{\varepsilon_n}\sigma_z-\sigma_{x_1}^{\varepsilon_1}\cdots\sigma_{x_n}^{\varepsilon_n}\\
&=&\phi(x_1^{\varepsilon_1}\cdots
x_n^{\varepsilon_n}z-x_1^{\varepsilon_1}\cdots
x_n^{\varepsilon_n})\\
&=&\phi(\lambda_{x_1^{\varepsilon_1}\cdots
x_n^{\varepsilon_n}}(z))\\
&=&\sigma_{\lambda_{x_1^{\varepsilon_1}\cdots
x_n^{\varepsilon_n}}(z)}.
\end{eqnarray*}
Since, by assumption, $\ret(X,r)=(X,r)$ we get that
$\lambda_{x_1^{\varepsilon_1}\cdots x_n^{\varepsilon_n}}(z)=z$, for
all $z\in X$. Thus
$$g=\sigma_{x_1}^{\varepsilon_1}\cdots\sigma_{x_n}^{\varepsilon_n}=\phi(x_1^{\varepsilon_1}\cdots x_n^{\varepsilon_n})=\id_X,$$
and therefore $\soc(\mathcal{G}(X,r))=\{ 1\}$.
\end{proof}

Clearly, the converse of this result is not true. For example, let
$X=\{ 1,2\}$ and let $r\colon X^2\longrightarrow X^2$ be defined by
$r(x,y)=(y,x)$. Then $(X,r)$ is a solution of the YBE,
$\ret(X,r)\neq (X,r)$  and $\soc(\mathcal{G}(X,r))=\{ 1\}$.  What is
true is the following:
\begin{remark}{\rm
If $B$ is a left brace with $\soc(B)=\{1\}$, then there exists a
solution $(X,r)$ such that $\mathcal{G}(X,r)\cong B$ as left braces,
and $\ret(X,r)=(X,r)$.  Indeed, consider the associated solution of
$B$: $X=B$ and the map $r$ is given by
$$
\begin{array}{cccc}
r:& B\times B& \longrightarrow& B\times B\\
 &  (a,b)& \mapsto& (\lambda_a(b),\lambda^{-1}_{\lambda_a(b)}(a)).
\end{array}
$$
Note that,
$$\mathcal{G}(X,r)=\langle \lambda_a\mid a\in
B\rangle=\{ \lambda_a\mid a\in B\}\cong B/\soc(B)=B.$$ Moreover,
$\ret(X,r)= (X,r)$, because if $\lambda_{a_1}=\lambda_{a_2}$, then
$\lambda_{a_2^{-1}a_1}=\id$, and since the socle is trivial,
$a^{-1}_2a_1=1$.}
\end{remark}

If $B$ is a finite non-trivial two-sided brace, then $\soc(B)\neq\{
1\}$ \cite[Proposition 3]{CJO2}. By Lemma \ref{socle}, any such
solution $(X,r)$ with $\mathcal{G}(X,r)\cong B$ satisfies
$\ret(X,r)\neq(X,r)$. In fact, $(X,r)$ is a multipermutation
solution, (see \cite[Corollary~5.17]{GI15} or the proof of
\cite[Theorem 3]{CJO2} and the comments after this proof). Hence to
study finite non-multipermutation solutions, one should consider
only finite left braces which are not two-sided.

\begin{lemma}\label{orbits}
Let $(X,r)$ be a solution of the YBE and let $\{ X_i\}_{i\in I}$ be
the family of all orbits of $X$ under the action of
$\mathcal{G}(X,r)$. Suppose $\leq$ is a well-order on $I$. For each
$i\in I$ denote by $G_i$ the subgroup of $G(X,r)$ generated by
$X_i$. Then
\begin{itemize}
\item[(i)] $G_i$ is a subbrace of $G(X,r)$, invariant by the action of
$\mathcal{G}(X,r)$.
\item[(ii)]  $G_iG_j=G_jG_i$ for all $i,j\in I$.
\item[(iii)] Every $g\in G(X,r)\setminus \{ 1\}$ has a unique presentation as a product $g=g_1\cdots g_m$,
where $g_j\in G_{i_j}\setminus\{ 1\}$, for $j=1,\dots ,m$, and
$i_1<\dots <i_m$  are elements of $I$. Moreover, $g$ can be
presented uniquely as a sum $g=h_1+\cdots+h_m$, where $h_j\in
G_{i_j}\setminus\{ 1\}$, for $j=1,\dots ,m$, and $i_1<\dots <i_m$
are elements of $I$.
\end{itemize}
\end{lemma}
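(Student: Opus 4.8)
The plan is to reduce (i)--(iii) to the single statement that, inside $G(X,r)$ --- whose underlying additive group is the free abelian group $\freeabelian{X}$ on $X$ --- the multiplicative subgroup $G_i$ coincides with the additive subgroup $\freeabelian{X_i}$ spanned by $X_i$. Granting this for a moment, (i) is immediate: since each $\sigma_x$ permutes $X$ preserving every orbit, and since $\gamma_y(x)=\sigma^{-1}_{\sigma_x(y)}(x)$ always lies in the orbit of $x$, the group $\mathcal{G}(X,r)\subseteq\sym(X)$ stabilizes $\freeabelian{X_i}$; and because $\phi(a)$ is the restriction to $X$ of $\lambda_a$ for every $a\in G(X,r)$, the additive automorphism $\lambda_a$ of $\freeabelian{X}$ maps each $\freeabelian{X_j}$ onto itself. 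Hence, for $a,b\in\freeabelian{X_i}$, the identities $a\cdot b=a+\lambda_a(b)$ and $a^{-1}=-\lambda_a^{-1}(a)$ (recall that the unit of $(B,\cdot)$ is the zero of $(B,+)$) give $a\cdot b,\,a^{-1}\in\freeabelian{X_i}$, so $\freeabelian{X_i}$ is a $\mathcal{G}(X,r)$-invariant subbrace.

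To establish $G_i=\freeabelian{X_i}$, the inclusion $\subseteq$ is clear because $\freeabelian{X_i}$ is a multiplicatively closed set containing $X_i$. For $\supseteq$ I would observe that the two invariance facts above make $X_i^2$ an $r$-invariant subset, so $r$ restricts to a non-degenerate involutive set-theoretic solution $(X_i,r_i)$ of the YBE on $X_i$ (the Yang-Baxter relation survives restriction because $X_i^3$ is stable under $r_{12}$ and $r_{23}$). The defining relations of $G(X_i,r_i)$ are precisely the defining relations of $G(X,r)$ in which only letters of $X_i$ occur, so $x\mapsto x$ extends to a group homomorphism $\psi_i\colon G(X_i,r_i)\to G(X,r)$ with image $G_i$. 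Being induced by a morphism of solutions, $\psi_i$ is a homomorphism of left braces, in particular additive; since the additive group of $G(X_i,r_i)$ is $\freeabelian{X_i}$ and $\psi_i$ fixes every $x\in X_i$, its image is all of $\freeabelian{X_i}$. Thus $G_i=\freeabelian{X_i}$.

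For (ii): given $g\in G_i$ and $h\in G_j$, put $h'=\lambda_g(h)\in G_j=\freeabelian{X_j}$; then $g\cdot h=g+\lambda_g(h)=h'+g=h'\cdot\lambda_{h'}^{-1}(g)$ with $\lambda_{h'}^{-1}(g)\in G_i$, so $G_iG_j\subseteq G_jG_i$, hence by symmetry $G_iG_j=G_jG_i$. For (iii): since the $G_i$ pairwise permute, the union over finite $F=\{i_1<\dots<i_m\}\subseteq I$ of the products $G_{i_1}\cdots G_{i_m}$ is a subgroup of $G(X,r)$ containing all of $X$, hence equals $G(X,r)$; discarding trivial factors yields the required product presentation of any $g\neq1$. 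For uniqueness I would apply the additive projections $p_l\colon G(X,r)=\bigoplus_{l\in I}\freeabelian{X_l}\to\freeabelian{X_l}$ to $g=g_1\cdots g_m$: using $g_1\cdots g_m=g_1+\lambda_{g_1}(g_2\cdots g_m)$, the fact that each $\lambda_{g_1}$ preserves every $\freeabelian{X_l}$, and induction on $m$, one gets $p_l(g)=0$ for $l\notin\{i_1,\dots,i_m\}$ and $p_{i_1}(g)=g_1$; hence $i_1=\min\{l:p_l(g)\neq0\}$ and $g_1=p_{i_1}(g)$ are determined by $g$, and cancelling $g_1$ and inducting gives uniqueness. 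The sum presentation, and its uniqueness, are then exactly the direct-sum decomposition $(G(X,r),+)=\bigoplus_{i\in I}G_i$ furnished by (i), reading off the nonzero summands in increasing order of index.

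The only genuinely delicate point is the inclusion $\freeabelian{X_i}\subseteq G_i$ used in (i): a priori $G_i$ is closed only under the \emph{multiplication} of $G(X,r)$, so it is far from obvious that, for instance, $-x\in G_i$ for $x\in X_i$. Passing to the sub-solution $(X_i,r_i)$ together with the principle that a morphism of solutions induces a homomorphism of the associated left braces is what unlocks this; if one prefers to avoid quoting that principle, it can be verified by hand using the canonical bijective $1$-cocycle $G(X,r)\to\freeabelian{X}$, which is the sole mildly technical ingredient. Everything else is routine manipulation with the identities $a\cdot b=a+\lambda_a(b)$ and $a+b=a\cdot\lambda_a^{-1}(b)$.
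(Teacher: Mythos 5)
Your proof is correct, and its skeleton is the same as the paper's: everything hinges on the identity $G_i=\freeabelian{X_i}$ (the paper calls the right-hand side $G_i^+$), after which (ii) follows from $gh=\lambda_g(h)\cdot\lambda^{-1}_{\lambda_g(h)}(g)$ and (iii) from freeness of $(G(X,r),+)$ over $X$ --- your projection argument for uniqueness is just a repackaging of the paper's computation of $g_1^{-1}g_1'$ landing in $G_{i_1}^+\cap(G_{i_2}^++\cdots+G_{i_m}^+)=\{0\}$. The one genuine divergence is the inclusion $\freeabelian{X_i}\subseteq G_i$. The paper proves it by an explicit rewriting: given $h=\nu_1y_1+\cdots+\nu_ty_t$ with $y_l\in X_i$, it factors $h=(\nu_1y_1)\,\lambda^{-1}_{\nu_1y_1}(\nu_2y_2)\cdots\lambda^{-1}_{\nu_1y_1+\cdots+\nu_{t-1}y_{t-1}}(\nu_ty_t)$ and uses $-y=(\lambda^{-1}_{-y}(y))^{-1}$ together with orbit-invariance of the $\lambda$-action to see each factor is an element of $X_i$ or an inverse of one. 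You instead pass to the sub-solution $(X_i,r_i)$ and invoke functoriality: a morphism of solutions induces a homomorphism of the associated left braces, so the image of $G(X_i,r_i)\to G(X,r)$ is simultaneously $G_i$ and the additive span of $X_i$. That is a legitimate and arguably more conceptual route (it makes clear \emph{why} the multiplicative and additive spans agree), but the functoriality principle is not proved in the paper and is exactly as strong as the computation it replaces; if you use it you should either cite it precisely (e.g.\ via the uniqueness of the bijective $1$-cocycle extending the identity on generators, which requires checking that the $\lambda$-action of $G(X_i,r_i)$ on $X_i$ is the restriction of that of $G(X,r)$) or carry out the two-line hand computation above, as you yourself suggest. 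With that caveat addressed, the argument is complete.
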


\begin{proof}
Let $G_i^+$ be the additive subgroup of $G(X,r)$ generated by $X_i$.
We shall prove that $G_i^+=G_i$. Let $g\in G_i$. Then there exist
$x_1,\dots ,x_k\in X_i$ and integers $\varepsilon_1,\dots,
\varepsilon_k\in\{ -1,1\}$ such that $g=x_1^{\varepsilon_1}\cdots
x_k^{\varepsilon_k}$. We have
\begin{eqnarray*}
g&=&x_1^{\varepsilon_1}\cdots x_k^{\varepsilon_k}\\
&=&x_1^{\varepsilon_1}\cdots x_{k-1}^{\varepsilon_{k-1}}+\lambda_{x_1^{\varepsilon_1}\cdots x_{k-1}^{\varepsilon_{k-1}}}(x_k^{\varepsilon_k})\\
&=&x_1^{\varepsilon_1}+\lambda_{x_1^{\varepsilon_1}}(x_2^{\varepsilon_2})+\lambda_{x_1^{\varepsilon_1}x_2^{\varepsilon_2}}(x_3^{\varepsilon_3})+\dots
+\lambda_{x_1^{\varepsilon_1}\cdots
x_{k-1}^{\varepsilon_{k-1}}}(x_k^{\varepsilon_k}).
\end{eqnarray*}
Since $x_i^{-1}=-\lambda_{x_i^{-1}}(x_i)$, it is clear that
$G_i\subseteq G_i^+$. Let $h\in G_i^+$. Then there exist $y_1,\dots
y_t\in X_i$ and integers $\nu_1,\dots, \nu_t\in\{ -1,1\}$ such that
$h=\nu_1y_1+\dots +\nu_ty_t$.  We have
\begin{eqnarray*}
h&=&\nu_1y_1+ \dots +\nu_ty_t\\
&=&(\nu_1y_1+\dots +\nu_{t-1}y_{t-1})\lambda^{-1}_{\nu_1y_1+\dots +\nu_{t-1}y_{t-1}}(\nu_t y_t)\\
&=&(\nu_1y_1)\lambda^{-1}_{\nu_1y_1}(\nu_2
y_2)\lambda^{-1}_{\nu_1y_1+\nu_2y_2}(\nu_3y_3)\cdots\lambda^{-1}_{\nu_1y_1+\dots
+\nu_{t-1}y_{t-1}}(\nu_t y_t).
\end{eqnarray*}
Since $-y_i=(\lambda^{-1}_{-y_i}(y_i))^{-1}$, it is clear that
$G_i^+\subseteq G_i$. Hence $G_i^+=G_i$. Therefore $G_i$ is a
subbrace of $G(X,r)$ and clearly it is invariant  under the action
of $\mathcal{G}(X,r)$. This proves $(i)$.

By $(i)$, we know that $\lambda_{g}(G_i)=G_i$, for all $g\in G(X,r)$
and $i\in I$. Let $g\in G_i$ and $h\in G_j$. By
\cite[Lemma~2(i)]{CJO2},
$gh=\lambda_g(h)\lambda^{-1}_{\lambda_g(h)}(g)\in G_jG_i$. Therefore
$G_iG_j\subseteq G_jG_i$. Thus $(ii)$ follows by symmetry.

Therefore, for every $g\in G(X,r)\setminus \{ 1\}$, there exist a
positive integer $m$, $i_1,\dots ,i_m\in I$ and $g_j\in G_{i_j}$,
for $j=1,\dots ,m$, such that $i_1<\dots <i_m$ and $g=g_1\cdots
g_m$. Suppose that $g_1\cdots g_m=g'_1\cdots g'_m$, for $g_j,g'_j\in
G_{i_j}$. Then,  by the above,
\begin{eqnarray*}g_1^{-1}g'_1&=&g_2\cdots g_m(g'_m)^{-1}\cdots
(g'_2)^{-1}\\
&=&g_2+\lambda_{g_2}(g_3)+\dots +\lambda_{g_2\cdots
g_{m-1}}(g_m)+\lambda_{g_2\cdots g_{m}}((g'_m)^{-1})\\
&&+\dots + \lambda_{g_2\cdots g_m(g'_m)^{-1}\cdots
(g'_3)^{-1}}((g'_2)^{-1})\in G_1^+\cap (G_2^++\dots +G_m^+).
\end{eqnarray*}
Since the additive group of $G(X,r)$ is free abelian with basis $X$,
we have that $G_1^+\cap (G_2^++\dots +G_m^+)=\{ 0\}$. Hence
$g_1=g'_1$. By induction on $m$, it follows that $g_j=g'_j$ for all
$j=1,\dots ,m$.  Let $h_1=g_1$ and $h_{i}=\lambda_{g_1\cdots
g_{i-1}}(g_i)$, for $1<i\leq m$. We have $g=h_1+\cdots+h_m$ and
$h_j\in G_{i_j}\setminus\{ 1\}$, for $j=1,\dots ,m$. Therefore,
$(iii)$ follows.
\end{proof}


Let $(X,r)$ be a solution of the YBE. We know that $G(X,r)$ is a
group presented with the set of generators $X$ and with relations
$xy=zt$ whenever $r(x,y)=(z,t)$.  Since the relations are
homogeneous, the group $G(X,r)$ has a degree function $\deg\colon
G(X,r)\longrightarrow \Z$ such that $\deg(x)=1$, for all $x\in X$.
Therefore, for $x_1,x_2,\dots ,x_m\in X$ and $n_1,n_2, \dots
,n_m\in \Z$, $\deg(x_1^{n_1}\cdots x_m^{n_m})=\sum_{l=1}^{m}n_l$.

\begin{remark}\label{degsum}{\rm
Since $(G(X,r),+)\cong \Z^{(X)}$, we can also define an additive
degree function  $\deg_+$ as follows $\deg_+(n_1x_1+\cdots +n_m
x_m)=\sum_{l=1}^{m}n_l$,  for $x_1,\dots ,x_m\in X$ and $n_1,\dots
,n_m\in \Z$. In fact, the two functions coincide.

Indeed, take an arbitrary element $g=x_1^{\varepsilon_1}\cdots
x_k^{\varepsilon_k}$ of $G(X,r)$, where $x_1,\dots x_k\in X$ and
$\varepsilon_1,\dots, \varepsilon_k\in\{ -1,1\}$. Note that
$\deg(g)=\sum_{i=1}^k \varepsilon_i$. As seen before, in a left
brace, we can pass from the multiplicative form to the additive form
through the lambda maps, and this in the following way:
\begin{eqnarray*}
g&=&x_1^{\varepsilon_1}\cdots x_k^{\varepsilon_k}\\
&=&x_1^{\varepsilon_1}+\lambda_{x_1^{\varepsilon_1}}(x_2^{\varepsilon_2})+\lambda_{x_1^{\varepsilon_1}x_2^{\varepsilon_2}}(x_3^{\varepsilon_3})+\dots
+\lambda_{x_1^{\varepsilon_1}\cdots
x_{k-1}^{\varepsilon_{k-1}}}(x_k^{\varepsilon_k}).
\end{eqnarray*}
Since $x_i^{-1}=-\lambda_{x_i^{-1}}(x_i)$ and $\lambda_h(x)\in X$
for any $h\in G(X,r)$ and any $x\in X$, it is clear that there exist
$y_1,\dots , y_k\in X$ such that
$$g=\varepsilon_1y_1+\dots +\varepsilon_ky_k.$$
 So we get $\deg_{+}(g)=\sum_{i=1}^k \varepsilon_i=\deg(g)$,
as claimed.

Now that we know that $\deg_+=\deg$, we can prove some properties of this function:
for any $g,h\in G(X,r)$,
\begin{enumerate}[(a)]
\item $\deg(\lambda_g(h))=\deg(h)$. We use the additive definition $\deg_+$
of the function.
Assume $h=\varepsilon_1 x_1+\cdots +\varepsilon_k x_k$, where
$x_1,\dots,x_k\in X$ and
$\varepsilon_1,\dots,\varepsilon_k\in\{-1,1\}$. Then
$\lambda_g(h)=\varepsilon_1 \lambda_g(x_1)+\cdots +\varepsilon_k
\lambda_g(x_k)$, and since $\lambda_g(x)\in X$ for any $g\in G$
and $x\in X$, $\deg(\lambda_g(h))=
\deg_+(\lambda_g(h))=\sum_{i=1}^k\varepsilon_i=\deg(h)$.
\item $\deg(g+h)=\deg(g)+\deg(h)$. This is a direct consequence of
the application of the additive function $\deg_+$.
\item $\deg(g\cdot h)=\deg(g)+\deg(h)$. This follows from the definition of $\deg$.
\end{enumerate}}
\end{remark}

 We will use all these facts in the proof of the
next lemma.

\begin{lemma}\label{ideal}
Let $(X,r)$ be a solution of the YBE. Let $\{ X_i\}_{i\in I}$ be the
family of all orbits of $X$ under the action of $\mathcal{G}(X,r)$.
Let $G_i$ be the subgroup of $G(X,r)$ generated by $X_i$. Let $\leq$
be a well-order on $I$. Let $H=\{ g_1\cdots g_n\mid g_l\in G_{i_l}$,
for $i_1<i_2<\dots <i_n$ in $I$ and $\deg(g_l)=0 \}=\{ g_1+\cdots +g_n\mid g_l\in G_{i_l}$,
for $i_1<i_2<\dots <i_n$ in $I$ and $\deg(g_l)=0 \}$. Then $H$ is an
ideal of the left brace  $G(X,r)$.
\end{lemma}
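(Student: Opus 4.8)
The goal is to show that $H$—the set of elements of $G(X,r)$ that, when written in the normal form from Lemma~\ref{orbits}(iii) as a product $g_1\cdots g_n$ with $g_l\in G_{i_l}$ and $i_1<\dots<i_n$, have every component of degree zero—is an ideal of the left brace $G(X,r)$. First I would record that the two descriptions of $H$ (as products and as sums) really do coincide: by Lemma~\ref{orbits}(iii) every $g\ne 1$ has a unique product presentation and a unique sum presentation with components $h_j=\lambda_{g_1\cdots g_{j-1}}(g_j)\in G_{i_j}$, and by Remark~\ref{degsum}(a) we have $\deg(h_j)=\deg(\lambda_{g_1\cdots g_{j-1}}(g_j))=\deg(g_j)$, so the degree-zero condition on the product components is equivalent to the degree-zero condition on the sum components. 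This gives the flexibility to check the additive conditions using the sum form and the multiplicative conditions using the product form.

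Next I would verify that $H$ is an additive subgroup. Using the sum description: if $g=h_1+\dots+h_m$ and $g'=h'_1+\dots+h'_{m'}$ are in $H$, then $g-g'$ lies in the additive span of $\bigcup_i G_i^+=\bigcup_i G_i$, and grouping the terms by orbit (the $G_i^+$ are the free-abelian summands $\Z^{(X_i)}$ spanning $\Z^{(X)}$) we get the unique sum presentation of $g-g'$; each orbit-component is a difference of degree-zero elements of some $G_i$, hence degree zero by Remark~\ref{degsum}(b). So $g-g'\in H$, and $0\in H$ trivially. Then I would show $H$ is a normal subgroup of $(G(X,r),\cdot)$ and is $\lambda$-invariant. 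For $\lambda$-invariance: if $g=h_1+\dots+h_m\in H$ then $\lambda_a(g)=\lambda_a(h_1)+\dots+\lambda_a(h_m)$; by Lemma~\ref{orbits}(i) each $\lambda_a(G_{i_j})=G_{i_j}$, so this is (after discarding any zero terms) the sum presentation of $\lambda_a(g)$, and $\deg(\lambda_a(h_j))=\deg(h_j)=0$ by Remark~\ref{degsum}(a). Hence $\lambda_a(g)\in H$. For normality: conjugation $a g a^{-1}$ can be analyzed by first writing $a g a^{-1}=\lambda_a(g)\cdot\big(\lambda_a(g)^{-1}aga^{-1}\lambda_a(g)\big)\cdots$—more cleanly, I would use the brace identity $ab=\lambda_a(b)\cdot\lambda_{\lambda_a(b)}^{-1}(a)$ (Lemma~2 of \cite{CJO2}, already invoked in the proof of Lemma~\ref{orbits}) to rewrite $aga^{-1}$; the key point is that in any rearrangement the $\lambda$-maps preserve each $G_i$ and preserve degree, and $\deg(a g a^{-1})$ has degree-zero orbit-components because $\deg(aga^{-1})=\deg(g)$ componentwise once everything is pushed into normal form. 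Alternatively, since $\lambda$-invariance plus the ideal condition below will be needed anyway, I may deduce normality from them.

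Finally, the defining ideal condition: for all $b\in G(X,r)$ and $a\in H$, $b a - b\in H$, i.e. $\lambda_b(a)\in H$. But this is exactly the $\lambda$-invariance already established, since $ba-b=\lambda_b(a)$. So the ideal condition reduces to $\lambda$-invariance, which is the clean heart of the argument. The main obstacle I anticipate is the bookkeeping for normality under multiplication: moving a factor $g_l\in G_{i_l}$ past factors in other $G_{i_k}$ using Lemma~\ref{orbits}(ii) repeatedly produces $\lambda$-twisted versions of the factors, and one must check that after collecting everything into the unique normal form of Lemma~\ref{orbits}(iii) the orbit-components still have degree zero—this works because $\deg$ is additive on products (Remark~\ref{degsum}(c)) and $\lambda$-invariant (Remark~\ref{degsum}(a)), so the total degree contributed to each orbit $G_i$ is unchanged, but writing this carefully is the fiddly part; everything else is a direct application of Lemmas~\ref{orbits} and the degree remark.
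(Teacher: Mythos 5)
Your proposal is correct and follows essentially the same route as the paper: well-definedness of $H$ from the coincidence of the multiplicative and additive degree functions together with Lemma~\ref{orbits}, normality from the multiplicative description (the commutation $G_iG_j=G_jG_i$ moves factors between the orbit subgroups via $\lambda$-maps, which preserve each $G_i$ and preserve degree, so the normal form of $aha^{-1}$ still has degree-zero components), and the ideal condition $ba-b=\lambda_b(a)\in H$ from $\lambda$-invariance of the sum description. One caution: the ``alternative'' you float for normality does not work, since a $\lambda$-invariant subgroup is in general only a left ideal and need not be normal in the multiplicative group, so you do need the commutation-and-degree bookkeeping that you correctly describe as your primary route.
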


\begin{proof}
The set $\{ g_1\cdots g_n\mid g_l\in G_{i_l}$, for $i_1<i_2<\dots
<i_n$ in $I$ and $\deg(g_l)=0 \}$ is equal to $\{ g_1+\cdots
+g_n\mid g_l\in G_{i_l}$, for $i_1<i_2<\dots <i_n$ in $I$ and
$\deg(g_l)=0 \}$ because of Lemma~\ref{orbits} and the fact that the
multiplicative and the additive degree coincide. Hence $H$ is
well-defined.

By the definition of $H$ in multiplicative terms and Lemma~\ref{orbits}, it is easy to see
that $H$ is a normal subgroup of $G(X,r)$. Let $h\in H$ and $g\in
G(X,r)$. There exist elements $i_1<i_2<\dots <i_n$ in $I$ and
$g_l\in G_{i_l}$ such that $h=g_1+\cdots +g_n$ and $\deg(g_l)=0$ for
$l=1,2,\dots, n$. We have that
\begin{eqnarray*}
\lambda_g(h)&=&\lambda_g(g_1+\cdots +g_n)\\
&=&\lambda_g(g_1)+\lambda_{g}(g_2)+\dots +\lambda_{g}(g_n).
\end{eqnarray*}
Now $\lambda_g(g_l)\in G_l$ and $\deg(\lambda_g(g_l))=\deg(g_l)=0$,
so $H$ is $\lambda_g$-invariant and the assertion follows.
\end{proof}

\section{The main construction}\label{constr}
In this section we construct a new family of irretractable
square-free solutions of the YBE. These will be strong twisted
unions of multipermutation solutions of multipermutation level
$2$.

Strong twisted unions of solutions of the YBE were introduced in
\cite[Definition~5.1]{gat-maj}. In fact, the original definition
only covered the union of two quadratic sets. The general definition
appeared later in \cite[Definition~3.5]{GIC}. Recall that a solution
$(X,r)$ of the YBE is a strong twisted union of a set of solutions
$\{(X_j,r_j)\mid j\in J\}$, with $1<|J|$, if the sets $X_j$ are
$\mathcal{G}(X,r)$-invariant subsets of $X$, $X=\bigcup_{j\in
J}X_j$, $X_j\cap X_k=\emptyset$ for $j\neq k$, $r_j$ is the
restriction of $r$ to $X_j^2$ and, for all $j,k\in J$ such that
$j\neq k$,
\begin{eqnarray}\label{strong}
&&\sigma_{\gamma_x(z)}(y)=\sigma_{z}(y)\quad\mbox{and}\quad
\gamma_{\sigma_z(x)}(t)=\gamma_{x}(t),\end{eqnarray}
for all $x,y\in
X_j$, $z,t\in X_k$, where $r(a,b)=(\sigma_a(b),\gamma_b(a))$, for
all $a,b\in X$.

Let $A$ and $B$ be a nontrivial (additive) abelian groups. Let $I$
be a set with $|I|>1$ and let $X(A,B,I)=A\times B\times I$. Let
$\varphi_1\colon A\longrightarrow B$ be a map of sets such that
$\varphi_1(-a)=\varphi_1(a)$ for all $a\in A$. Let $\varphi_2\colon
B\longrightarrow A$ be a homomorphism of groups. For $a\in A$, $b\in
B$ and $i\in I$, let $\sigma_{(a,b,i)}\colon X(A,B,I)\longrightarrow
X(A,B,I)$ be the map defined by
$$\sigma_{(a,b,i)}(c,d,j)=\left\{\begin{array}{ll}
(c,d+\varphi_1(a-c),j)&\mbox{ if }i=j,\\
(c+\varphi_2(b),d,j)&\mbox{ if }i\neq j,
\end{array}\right.$$
for all $c\in A$, $d\in B$ and $j\in I$. Note that
$\sigma_{(a,b,i)}$ is bijective and
$$\sigma^{-1}_{(a,b,i)}(c,d,j)=\left\{\begin{array}{ll}
(c,d-\varphi_1(a-c),j)&\mbox{ if }i=j,\\
(c-\varphi_2(b),d,j)&\mbox{ if }i\neq j,
\end{array}\right.$$
for all $c\in A$, $d\in B$ and $j\in I$.

Let $r\colon X(A,B,I)^2\longrightarrow X(A,B,I)^2$ be defined by
$$r((a,b,i),(c,d,j))=(\sigma_{(a,b,i)}(c,d,j),
\sigma^{-1}_{\sigma_{(a,b,i)}(c,d,j)}(a,b,i)).$$ For $i\in I$, put
$X_i=A\times B\times\{ i\}$. Clearly we have that
$r^2=\id_{X(A,B,I)^2}$, i.e. $r$ is involutive.

\begin{lemma}\label{new} For $c\in A$, $d\in B$ and $j\in
I$, let $\gamma_{(c,d,j)}\colon X(A,B,I)\longrightarrow X(A,B,I)$ be
the map defined by
$$\gamma_{(c,d,j)}(a,b,i)=\sigma^{-1}_{\sigma_{(a,b,i)}(c,d,j)}(a,b,i),$$
for all $a\in A$, $b\in B$ and $i\in I$. Then $\gamma_{(c,d,j)}$ is
bijective and
$$\gamma^{-1}_{(c,d,j)}=\sigma_{(c,d,j)}.$$
\end{lemma}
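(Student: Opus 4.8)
The plan is to compute $\gamma_{(c,d,j)}$ in closed form and then verify by a direct substitution that it is a two-sided inverse of $\sigma_{(c,d,j)}$. Since $\sigma_{(c,d,j)}$ has already been observed to be a bijection, it is in fact enough to check the single identity $\gamma_{(c,d,j)}\circ\sigma_{(c,d,j)}=\id_{X(A,B,I)}$: composing this with $\sigma_{(c,d,j)}^{-1}$ on the right immediately gives $\gamma_{(c,d,j)}=\sigma_{(c,d,j)}^{-1}$, whence $\gamma_{(c,d,j)}$ is bijective and $\gamma^{-1}_{(c,d,j)}=\sigma_{(c,d,j)}$.

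First I would unravel the definition. Fix $(a,b,i)\in X(A,B,I)$ and set $(c',d',j)=\sigma_{(a,b,i)}(c,d,j)$, so that $\gamma_{(c,d,j)}(a,b,i)=\sigma^{-1}_{(c',d',j)}(a,b,i)$. Plugging in the piecewise formulas for $\sigma_{(a,b,i)}$ and then for $\sigma^{-1}_{(c',d',j)}$, and splitting according to whether $i=j$ or $i\neq j$, one gets, using $\varphi_1(-a)=\varphi_1(a)$ to rewrite $\varphi_1(c-a)=\varphi_1(a-c)$ in the first case,
\begin{equation*}
\gamma_{(c,d,j)}(a,b,i)=\begin{cases}(a,\,b-\varphi_1(a-c),\,i)&\text{if }i=j,\\[2pt](a-\varphi_2(d),\,b,\,i)&\text{if }i\neq j.\end{cases}
\end{equation*}
Here the case $i\neq j$ only uses that $\sigma_{(a,b,i)}$ fixes the second coordinate of $(c,d,j)$, so that $d'=d$.

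Then I would substitute back. For $i=j$ one has $\sigma_{(c,d,j)}(a,b,i)=(a,b+\varphi_1(c-a),i)$, and applying $\gamma_{(c,d,j)}$ (again the equal-index branch) yields $(a,\,b+\varphi_1(c-a)-\varphi_1(a-c),\,i)=(a,b,i)$ by symmetry of $\varphi_1$; for $i\neq j$ one has $\sigma_{(c,d,j)}(a,b,i)=(a+\varphi_2(d),b,i)$, and applying $\gamma_{(c,d,j)}$ gives $(a+\varphi_2(d)-\varphi_2(d),\,b,\,i)=(a,b,i)$. Thus $\gamma_{(c,d,j)}\circ\sigma_{(c,d,j)}=\id_{X(A,B,I)}$, and the conclusion follows as explained above. (Alternatively one may check $\sigma_{(c,d,j)}\circ\gamma_{(c,d,j)}=\id_{X(A,B,I)}$ in exactly the same way.)

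This argument presents no genuine difficulty; it is pure bookkeeping with the two-branch definitions of $\sigma$ and $\sigma^{-1}$. The one place that requires attention is invoking the hypothesis $\varphi_1(-a)=\varphi_1(a)$ precisely in the equal-index branch — without it the cancellation in the $B$-coordinate would fail — whereas the homomorphism property of $\varphi_2$ plays no role in this particular lemma.
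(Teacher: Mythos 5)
Your proof is correct and follows essentially the same route as the paper: compute the closed form of $\gamma_{(c,d,j)}$ in the two cases $i=j$ and $i\neq j$ and observe that it coincides with $\sigma^{-1}_{(c,d,j)}$. One small correction to your closing remark: the hypothesis $\varphi_1(-a)=\varphi_1(a)$ is \emph{not} actually needed for this lemma --- in the equal-index branch the direct computation gives $\gamma_{(c,d,j)}(a,b,i)=(a,\,b-\varphi_1(c-a),\,i)$, which already matches $\sigma^{-1}_{(c,d,j)}(a,b,i)$ verbatim, so the cancellation in the $B$-coordinate goes through without any symmetry; you only ``need'' it because you first used it to rewrite $\varphi_1(c-a)$ as $\varphi_1(a-c)$ and then had to undo that rewriting. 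The symmetry of $\varphi_1$ is genuinely used only later, in verifying the braid relation in Theorem~\ref{solution}.
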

\begin{proof}
Note that
$$\sigma^{-1}_{\sigma_{(a,b,i)}(c,d,j)}(a,b,i)=\left\{\begin{array}{ll}
(a,b-\varphi_1(c-a),i)&\mbox{ if }i=j,\\
(a-\varphi_2(d),b,i)&\mbox{ if }i\neq j.
\end{array}\right.$$
Therefore the result follows.
\end{proof}

\begin{remark}\label{referee}{\rm
Lemma~\ref{new} means that $(X(A,B,I),r)$ satisfies condition {\bf
lri} (see \cite[Definition~2.6]{GI15}). By \cite[Fact~2.7]{GI15}
every square-free solution of the Yang-Baxter equation satisfies
condition {\bf lri}. But the converse is not true. By
\cite[Fact~2.8]{GI15}, since $(X(A,B,I),r)$ is involutive and
satisfies {\bf lri}, we have that $(X(A,B,I),r)$ also is cyclic (see
\cite[Definition~2.6]{GI15}). In \cite{GI15} Gateva-Ivanova
continued her systematic study of solutions of the Yang-Baxter
equation with these important conditions.}
\end{remark}

\begin{theorem}\label{solution} $(X(A,B,I),r)$ is a solution of the YBE
and the following conditions hold.
\begin{itemize}
\item[(i)] $(X(A,B,I),r)$ is square-free if and only if $\varphi_1(0)=0$.
\item[(ii)] If $\varphi_1^{-1}(0)=\{0\}$ and $\varphi_2$ is injective, then $(X(A,B,I),r)$ is irretractable.
\item[(iii)] Every $X_i$ is
invariant under the action of $\mathcal{G}(X(A,B,I),r)$, and if
$r_i$ is the restriction of $r$ to $X_i^2$, then $(X_i,r_i)$ is a
multipermutation solution of multipermutation level at most two and
$(X(A,B,I), r)$ is a strong twisted union of the solutions
$(X_i,r_i)$.
\item[(iv)] If $\varphi_1(A)$ generates $B$ as a group and $\varphi_2$ is surjective, then
the orbits for the action of $\mathcal{G}(X(A,B,I),r)$ on $X(A,B,I)$
are $X_i$, for $i\in I$.
\end{itemize}
\end{theorem}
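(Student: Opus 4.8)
The plan is to verify the four claims mostly by direct computation with the explicit formulas for $\sigma_{(a,b,i)}$, $\sigma^{-1}_{(a,b,i)}$ and $\gamma_{(c,d,j)}$ already at hand.

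\medskip

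\noindent\textbf{That $(X(A,B,I),r)$ is a solution.} We already know from the construction that $r$ is involutive, and by Lemma~\ref{new} it is non-degenerate (each $\sigma_{(a,b,i)}$ and each $\gamma_{(c,d,j)}$ is bijective). It remains to check the braid relation $r_{12}r_{23}r_{12}=r_{23}r_{12}r_{23}$. Rather than expand this directly, I would use the standard reformulation (see e.g.\ \cite{ESS,GI}): for an involutive non-degenerate $r$ with $r(x,y)=(\sigma_x(y),\gamma_y(x))$, the braid relation is equivalent to the single identity $\sigma_x\sigma_y=\sigma_{\sigma_x(y)}\sigma_{\gamma_y(x)}$ for all $x,y$. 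So the real task is to prove
$$\sigma_{(a,b,i)}\,\sigma_{(c,d,j)}=\sigma_{\sigma_{(a,b,i)}(c,d,j)}\,\sigma_{\gamma_{(c,d,j)}(a,b,i)}.$$
One evaluates both sides on an arbitrary $(e,f,k)$ and splits into cases according to how $i,j,k$ coincide. Using Lemma~\ref{new} one has $\sigma_{(a,b,i)}(c,d,j)=(c,d+\varphi_1(a-c),i)$ or $(c+\varphi_2(b),d,j)$, and $\gamma_{(c,d,j)}(a,b,i)=(a,b-\varphi_1(c-a),i)$ or $(a-\varphi_2(d),b,i)$; note $\varphi_1(a-c)=\varphi_1(c-a)$ by hypothesis, which is exactly what makes the first-coordinate cases match. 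Because $\varphi_2$ is a homomorphism, the additive shifts by $\varphi_2(\cdot)$ telescope correctly when $k$ differs from both $i$ and $j$. I expect this case analysis (roughly the cases $i=j=k$, $i=j\neq k$, $i=k\neq j$, $j=k\neq i$, and all distinct) to be the bulk of the work, but each case reduces to an identity in the abelian groups $A$, $B$ and is routine once set up; the key structural inputs are precisely $\varphi_1(-a)=\varphi_1(a)$ and $\varphi_2\in\operatorname{Hom}(B,A)$.

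\medskip

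\noindent\textbf{(i) Square-free.} $r((a,b,i),(a,b,i))=(\sigma_{(a,b,i)}(a,b,i),\ast)$ and $\sigma_{(a,b,i)}(a,b,i)=(a,b+\varphi_1(0),i)$; by involutivity this is $(a,b,i)$ for all $(a,b,i)$ if and only if $\varphi_1(0)=0$.

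\medskip

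\noindent\textbf{(ii) Irretractability.} Assume $\varphi_1^{-1}(0)=\{0\}$ and $\varphi_2$ injective. Suppose $\sigma_{(a,b,i)}=\sigma_{(c,d,j)}$. First, evaluating on any point of the ``same-block'' type versus ``different-block'' type forces $i=j$: if $i\neq j$ one compares the image of a point in $X_i$ (shifted in the $A$-coordinate by $\varphi_2$ for one map, shifted in the $B$-coordinate by $\varphi_1$ for the other) and derives a contradiction using non-triviality of $A$ and $B$ together with the injectivity hypotheses — this is the step I'd be most careful about, picking witness points cleverly so the two rules genuinely conflict. Once $i=j$: evaluating on $(c',d',k)$ with $k\neq i$ gives $c'+\varphi_2(b)=c'+\varphi_2(d)$, so $\varphi_2(b-d)=0$, hence $b=d$ by injectivity of $\varphi_2$; evaluating on $(c',d',i)$ gives $\varphi_1(a-c')=\varphi_1(c-c')$ for all $c'$, and choosing $c'=a$ yields $\varphi_1(c-a)=0$, so $c=a$ by $\varphi_1^{-1}(0)=\{0\}$. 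Thus $(a,b,i)=(c,d,j)$, meaning $\sim$ is trivial and $\operatorname{Ret}(X,r)=(X,r)$.

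\medskip

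\noindent\textbf{(iii) and (iv).} For (iii): $\sigma_{(a,b,i)}$ maps $X_j$ to $X_j$ for every $j$ (the index is never changed), so each $X_i$ is $\mathcal{G}$-invariant. Restricting $r$ to $X_i^2$, the rule is $\sigma_{(a,b,i)}(c,d,i)=(c,d+\varphi_1(a-c),i)$, which depends on $(a,b,i)$ only through $a$; hence $\overline{(a,b,i)}$ under $\sim_{X_i}$ is determined by $a$, the retraction $\operatorname{Ret}(X_i,r_i)$ is indexed by $A$ with $\bar\sigma_a=\operatorname{id}$ (since after one retraction the first coordinate no longer moves), so $\operatorname{Ret}^2(X_i,r_i)$ is a point: multipermutation level $\le 2$. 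For the strong twisted union property one must check \eqref{strong}: for $x,y\in X_j$, $z,t\in X_k$ with $j\neq k$, compute $\sigma_{\gamma_x(z)}(y)$ and $\sigma_z(y)$ — here $\gamma_x(z)$ lies in $X_k$ and differs from $z$ only in a coordinate that doesn't affect the ``$i\neq j$'' rule used to move $y\in X_j$, so the two agree; symmetrically for the $\gamma$ condition, where one uses that $\gamma_{(c,d,j)}(a,b,i)$ changes $(a,b,i)$ only in a way invisible to the relevant rule. For (iv): the orbit of $(c,d,j)$ is contained in $X_j$ by invariance; conversely, acting by $\sigma_{(a,b,i)}$ with $i\neq j$ translates the $A$-coordinate by $\varphi_2(b)$, and as $b$ ranges over $B$ with $\varphi_2$ surjective these translations generate all of $A$; acting by $\sigma_{(a,b,j)}$ translates the $B$-coordinate by $\varphi_1(a-c)$, and since $\varphi_1(A)$ generates $B$ these reach all of $B$ (compose several such maps, using that successive first coordinates can be arranged freely). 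Hence the orbit is all of $X_j$, so the orbits are exactly the $X_i$.
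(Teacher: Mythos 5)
Your proposal is correct and follows essentially the same route as the paper: the braid relation is reduced to the single identity on the $\sigma$'s (the paper uses the equivalent form $\sigma_x\sigma_{\sigma_x^{-1}(y)}=\sigma_y\sigma_{\sigma_y^{-1}(x)}$ from \cite[Proposition~2]{CJO2} and carries out the same five-case computation you describe, with $\varphi_1(-a)=\varphi_1(a)$ and $\varphi_2$ being a homomorphism as the decisive inputs), and your arguments for (i)--(iv) match the paper's up to the choice of witness points in (ii) and of the base point in (iv).
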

\begin{proof} We know that $r$ is involutive. By Lemma~\ref{new}, $r$ is non-degenerate.
Furthermore
\begin{eqnarray}\label{newdef}
&& r((a,b,i),(c,d,j))=(\sigma_{(a,b,i)}(c,d,j),
\sigma^{-1}_{(c,d,j)}(a,b,i)).
\end{eqnarray}
By \cite[Proposition~2]{CJO2}, to prove that $(X(A,B,I),r)$ is a
solution of the YBE it is enough to check that
$$\sigma_{(a,b,i)}\sigma_{\sigma^{-1}_{(a,b,i)}(c,d,j)}=
\sigma_{(c,d,j)}\sigma_{\sigma^{-1}_{(c,d,j)}(a,b,i)},$$ for all
$a,c\in A$, $b,d\in B$ and $i,j\in I$. Since
$\varphi_1(-a)=\varphi_1(a)$ and $\varphi_2$ is a homomorphism of
groups, these equalities follow at once from the following two
formulas, where $e\in A$, $f\in B$ and $k\in I$.
\begin{eqnarray*}
\lefteqn{\sigma_{(a,b,i)}\sigma_{\sigma^{-1}_{(a,b,i)}(c,d,j)}(e,f,k)}\\
&=&
\left\{\begin{array}{ll}
(e,f+\varphi_1(c-e)+\varphi_1(a-e),k)&\mbox{ if
}i=j=k,\\
(e+\varphi_2(d-\varphi_1(a-c))+\varphi_2(b),f,k)&\mbox{ if }i=j\neq
k,\\
(e+\varphi_2(b),f+\varphi_1(c-\varphi_2(b)-e),k)&\mbox{ if }i\neq
j=k,\\
(e+\varphi_2(d),f+\varphi_1(a-e-\varphi_2(d)),k)&\mbox{ if
}j\neq i=k,\\
(e+\varphi_2(d)+\varphi_2(b),f,k)&\mbox{ if }j\neq i, i\neq k\\
&\mbox{ and } j\neq k,
\end{array}\right.
\end{eqnarray*}
and
\begin{eqnarray*}
\lefteqn{\sigma_{(c,d,j)}\sigma_{\sigma^{-1}_{(c,d,j)}(a,b,i)}(e,f,k)}\\
&=&
\left\{\begin{array}{ll}
(e,f+\varphi_1(a-e)+\varphi_1(c-e),k)&\mbox{ if
}i=j=k,\\
(e+\varphi_2(b-\varphi_1(c-a))+\varphi_2(d),f,k)&\mbox{ if }i=j\neq
k,\\
(e+\varphi_2(b),f+\varphi_1(c-e-\varphi_2(b)),k)&\mbox{ if }i\neq
j=k,\\
(e+\varphi_2(d),f+\varphi_1(a-\varphi_2(d)-e),k)&\mbox{ if
}j\neq i=k,\\
(e+\varphi_2(b)+\varphi_2(d),f,k)&\mbox{ if }j\neq i, i\neq k\\
&\mbox{ and } j\neq k.
\end{array}\right.
\end{eqnarray*}
Therefore, indeed $(X(A,B,I),r)$ is a solution of the YBE.

$(i)$ Note that $\sigma_{(a,b,i)}(a,b,i)=(a,b+\varphi_1(0),i)$.
Hence $(X(A,B,I),r)$ is square-free if and only if $\varphi_1(0)=0$.

$(ii)$ Suppose that $\varphi_1^{-1}(0)=\{ 0\}$ and that $\varphi_2$
is injective. Let $(a,b,i),(c,d,j)\in X(A,B,I)$ be two distinct
elements. If $i\neq j$, then
$\sigma_{(a,b,i)}(e,f,i)=(e,f+\varphi_1(a-e),i)$ and
$\sigma_{(c,d,j)}(e,f,i)=(e+\varphi_2(d),f,i)$. Since $A\neq \{
0\}$, we can take $e\in A$ such that $\varphi_1(a-e)\neq 0$.
Therefore, if $i\neq j$, then $\sigma_{(a,b,i)}\neq
\sigma_{(c,d,j)}$. Suppose that $i=j$. Then $(a,b)\neq (c,d)$. If
$a\neq c$, then $\sigma_{(a,b,i)}(c,d,i)=(c,d+\varphi_1(a-c),i)$ and
$\sigma_{(c,d,j)}(c,d,i)=(c,d,i)$. Thus, in this case, since
$\varphi_1(a-c)\neq 0$, $\sigma_{(a,b,i)}\neq \sigma_{(c,d,j)}$.
Suppose $i=j$ and $a=c$. Then $b\neq d$, and for $k\in I\setminus\{
i\}$ there are equalities
$\sigma_{(a,b,i)}(0,0,k)=(\varphi_2(b),0,k)$ and
$\sigma_{(c,d,j)}(0,0,k)=(\varphi_2(d),0,k)$, which by the
injectivity of $\varphi_2$, imply again $\sigma_{(a,b,i)}\neq
\sigma_{(c,d,j)}$. Thus we have shown that $\sigma_{(a,b,i)}=
\sigma_{(c,d,j)}$ if and only if $(a,b,i)=(c,d,j)$.  Therefore
$\ret(X(A,B,I),r)=(X(A,B,I),r)$.

$(iii)$ It follows from the definition of $r$ that each $X_i$ is
invariant under the action of $\mathcal{G}(X(A,B,I),r)$. Let
$\sigma_{(a,b,i)}|_{X_i}$ be the restriction of $\sigma_{(a,b,i)}$
to the set $X_i=A\times B\times \{ i\}$. It is easy to check that
$$\sigma_{(a,b,i)}|_{X_i}=\sigma_{(c,d,i)}|_{X_i}$$
 if $a=c$. Since
$$r((a,b,i),(c,d,i))=((c,d+\varphi_1(a-c),i),(a,b-\varphi_1(c-a),i)),$$
one easily gets that $\ret(X_i,r_i)$ is  a trivial solution and thus
$\ret^2(X_i,r_i)$ is a solution on a set of cardinality $1$.
Therefore $(X_i,r_i)$ is a multipermutation solution of
multipermutation level  at most two. Let $i,j$ be distinct elements
in $I$. To show that $(X(A,B,I),r)$ is a strong twisted union of the
solutions $(X_i,r_i)$, in view of (\ref{strong}) and (\ref{newdef}),
we should check that
$$\sigma_{\sigma^{-1}_{(a,b,i)}(c,d,j)}(e,f,i)=\sigma_{(c,d,j)}(e,f,i)$$
and
$$\sigma^{-1}_{\sigma_{(e,f,j)}(c,d,i)}(a,b,j)=\sigma^{-1}_{(c,d,i)}(a,b,j),$$
for all $a,c,e\in A$ and $b,d,f\in B$. We have
\begin{eqnarray*}
\sigma_{\sigma^{-1}_{(a,b,i)}(c,d,j)}(e,f,i)&=&\sigma_{(c-\varphi_2(b),d,j)}(e,f,i)\\
&=&(e+\varphi_2(d),f,i)\\
&=&\sigma_{(c,d,j)}(e,f,i), \quad\mbox{and}\\
\sigma^{-1}_{\sigma_{(e,f,j)}(c,d,i)}(a,b,j)&=&\sigma^{-1}_{(c+\varphi_2(f),d,i)}(a,b,j)\\
&=&(a-\varphi_2(d),b,j)\\
&=&\sigma^{-1}_{(c,d,i)}(a,b,j).
\end{eqnarray*}
Hence, indeed, $(X(A,B,I),r)$ is a strong twisted union of the
solutions $(X_i,r_i)$.

$(iv)$ Suppose that $\varphi_1(A)$ generates $B$ and that
$\varphi_2$ is surjective. Let $a\in A$ and $b\in B$. There exist
$a_1,\dots ,a_s\in A$, $d\in B$ and $z_1,\dots ,z_n\in \mathbb{Z}$
such that $b=z_1\varphi_1(a_1)+\dots +z_s\varphi_1(a_s)$ and
$\varphi_2(d)=a$. Note that if $i\neq k$, then
$\sigma_{(0,d,i)}(0,0,k)=(a,0,k)$ and
$\sigma^{z_1}_{(a_1+a,0,k)}\cdots\sigma^{z_s}_{(a_s+a,0,k)}(a,0,k)=(a,b,k)$.
Hence the orbit of $(0,0,k)$ is $A\times B\times\{ k\}$, and this
finishes the proof.
\end{proof}

\begin{remark}  \label{vendramin}
{\rm For $A=B=\Z/(2)$ and $I=\{1,2\}$ the solution $(X(A,B,I),r)$ of
the above theorem, with $\varphi_1=\varphi_2=\id_A$, is isomorphic
to the solution of \cite[Example~3.9]{Ven}. Recall that two
solutions $(X,r)$ and $(X',r')$ of the YBE are isomorphic if there
exists a bijective map $\eta:X\longrightarrow X'$ such that
$$r'(\eta(x),\eta(y))=(\eta(\sigma_x(y)),\eta(\gamma_y(x))),$$
where $r(x,y)=(\sigma_x(y),\gamma_y(x))$, for $x,y\in X$.}
\end{remark}

Recall that Gateva-Ivanova conjectured that every finite square-free
solution of the YBE is a multipermutation solution \cite[Strong
Conjecture 2.28(I)]{GI}. The  construction of Vendramin, given in
Remark~\ref{vendramin}, was the first counterexample to this
conjecture. In fact, he constructed a family of counterexamples
consisting of extensions of the one given above, i.e. square-free
solutions $(Y,s)$ such that there exists a surjective homomorphism
$Y\longrightarrow X$ of solutions, where $X$ is the solution
$(X(A,B,I),r)$ given in Remark~\ref{vendramin}.
 Notice
that \cite[Lemma 4]{CJO2} implies that these solutions are not
multipermutation solutions. However, we do not know whether these
solutions are irretractable.

Perhaps Gateva-Ivanova expected that her conjecture might be too
strong, because  the following related question was formulated in
\cite{GIC}.
\bigskip

\noindent {\bf Question 1.} \cite[Open Questions~6.13 (II)(4)]{GIC}
Let $(X,r)$ be a square-free solution of the YBE. Suppose that
$(X,r)$ is a strong twisted union of two solutions of the YBE. Is it
true that $(X,r)$ is a multipermutation solution?
\bigskip

Notice also that there are examples of square-free
multipermutation solutions of the YBE which are not a strong
twisted union of two solutions of the YBE, see
\cite[Theorem~3.1]{CJO}.

\begin{remark}
{\rm The solutions $(X(A,B,I),r)$,  with $\varphi_1^{-1}(0)=\{0\}$
and $\varphi_2$ injective, are new counterexamples to \cite[Strong
Conjecture 2.28(I)]{GI}, and  if moreover $|I|=2$, then they answer
in the negative Question 1. Note that if $\varphi_1^{-1}(0)=\{ 0\}$,
then $\sigma_{(0,0,i)}|_{X_i}\neq \sigma_{(a,0,i)}|_{X_i}$ for all
$a\neq 0$, thus, in this case, the multipermutation level of
$(X_i,r_i)$ is $2$. The solution constructed by Vendramin
\cite[Example 3.6]{Ven} also gives a negative answer to Question 1,
but this fact is not noticed in \cite{Ven}.}
\end{remark}

\section{The permutation group and the structure group of $(X(A,B,I),r)$}

First we will study the structure of the multiplicative group of the
left brace $\mathcal{G}(X(A,B,I),r)$.

Let $G$ and $H$ be two abelian groups, and let $W$ be the set of all
functions $f\colon H\longrightarrow G$. Then $W$ is an abelian group
with the sum $f_1+f_2$ defined by $(f_1+f_2)(h)=f_1(h)+f_2(h)$, for
$f_1,f_2\in W$ and $h\in H$. Recall that the complete wreath product
$G\bar\wr H$ can be defined as the semidirect product $W\rtimes H$
with respect to the action of $H$ on $W$ defined by
$(hf)(x)=f(x-h)$, for $f\in W$ and $h\in H$. The wreath product
$G\wr H$ is defined similarly, but replacing $W$ by the abelian
group $W'$ of all functions $f\colon H\longrightarrow G$ such that
the set $\{h\in H:f(h)\neq 0\}$ is finite. Obviously, when $H$ is
finite, the complete wreath product and the wreath product coincide.

\begin{proposition}\label{permutation}
The permutation group $\mathcal{G}(X(A,B,I),r)$ is isomorphic to a
subgroup of the Cartesian product of $|I|$ copies of the complete
wreath product $\langle \varphi_1(A)\rangle\bar\wr A$. In
particular, if moreover $A$ and $B$ are finite abelian $p$-groups,
and $I$ is finite, then $\mathcal{G}(X(A,B,I),r)$ is a finite
$p$-group.
\end{proposition}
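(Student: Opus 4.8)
The goal is to produce an injective group homomorphism from $\mathcal{G}(X(A,B,I),r)$ into a Cartesian product of $|I|$ copies of $\langle\varphi_1(A)\rangle\bar\wr A$. By Theorem~\ref{solution}(iii), each $X_i=A\times B\times\{i\}$ is invariant under the action of $\mathcal{G}=\mathcal{G}(X(A,B,I),r)$, so restriction gives a group homomorphism $\rho=(\rho_i)_{i\in I}\colon \mathcal{G}\longrightarrow \prod_{i\in I}\sym(X_i)$ where $\rho_i(g)=g|_{X_i}$. Since the generators $\sigma_{(a,b,i)}$ act on all of $X(A,B,I)$, a permutation $g\in\mathcal{G}$ that restricts to the identity on every $X_i$ is the identity on $X(A,B,I)$; hence $\rho$ is injective. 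So the problem reduces to identifying, for each fixed $i$, the image $\rho_i(\mathcal{G})$ inside $\sym(X_i)$ and showing it embeds in $\langle\varphi_1(A)\rangle\bar\wr A$.

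**Identifying the image on one fibre.** Fix $i\in I$ and examine how the generators act on $X_i$. For a generator $\sigma_{(c,d,i)}$ with the \emph{same} index $i$, the formula gives $\sigma_{(c,d,i)}(a,b,i)=(a,b+\varphi_1(c-a),i)$; this fixes the $A$-coordinate and adds to the $B$-coordinate a quantity depending only on $a$ (and on $c$). For a generator $\sigma_{(c,d,k)}$ with $k\neq i$, the formula gives $\sigma_{(c,d,k)}(a,b,i)=(a+\varphi_2(d),b,i)$; this is translation of the $A$-coordinate by the constant $\varphi_2(d)$ and fixes the $B$-coordinate. Now I identify $X_i$ with $B\times A$ (writing the $B$-coordinate first). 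Then every generator restricted to $X_i$ acts as: add to the $B$-coordinate a function of the $A$-coordinate, and translate the $A$-coordinate by a constant. These are exactly the elements of the wreath product $\langle\varphi_1(A)\rangle\bar\wr A = W\rtimes A$, where $W=\{f\colon A\to\langle\varphi_1(A)\rangle\}$: the pair $(f,t)$ acts on $(b,a)$ by $(b,a)\mapsto(b+f(a),a+t)$ — or rather $(b,a)\mapsto(b+f(a+t),a+t)$ depending on convention; one checks the composition law $(f_1,t_1)(f_2,t_2)=(f_1+t_1 f_2,\ t_1+t_2)$ matches, with $(t_1f_2)(x)=f_2(x-t_1)$, precisely the complete wreath product action recalled before the proposition. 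The key point to verify is that the values of the added $B$-functions always lie in the subgroup $\langle\varphi_1(A)\rangle$: for the same-index generators the added value is $\varphi_1(c-a)\in\langle\varphi_1(A)\rangle$, and for the cross-index generators the added $B$-value is $0$; since products of such maps add these functions together, all values stay in $\langle\varphi_1(A)\rangle$. Thus $\rho_i(\mathcal{G})\subseteq \langle\varphi_1(A)\rangle\bar\wr A$, and combining over $i$ gives the desired embedding.

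**The main obstacle.** The only real care needed is bookkeeping: checking that the explicit action of a \emph{product} of generators on $X_i$ decomposes in the stated ``translate $A$-coordinate, shift $B$-coordinate by a function of $A$-coordinate'' form, and that this matches the complete wreath product multiplication with the correct twisting $(hf)(x)=f(x-h)$. This is a routine but slightly fussy computation: one composes two maps of the form $(b,a)\mapsto(b+f(a),a+t)$ and confirms the result has the same form with the function and translation multiplying as in $W\rtimes A$. I would record this as a short direct verification. The ``in particular'' clause is then immediate: if $A,B$ are finite abelian $p$-groups then $\langle\varphi_1(A)\rangle\le B$ is a finite abelian $p$-group, so $W=\langle\varphi_1(A)\rangle^{A}$ is a finite abelian $p$-group (as $A$ is finite), hence $W\rtimes A$ is a finite $p$-group; a finite Cartesian product (over the finite set $I$) of finite $p$-groups is a finite $p$-group, and a subgroup of a finite $p$-group is a finite $p$-group, so $\mathcal{G}(X(A,B,I),r)$ is a finite $p$-group.
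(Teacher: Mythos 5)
Your proof is correct, and while it produces the same embedding as the paper, it gets there by a genuinely cleaner route. The paper defines the map $\nu$ directly on the generators $\sigma_{(a,b,i)}$ by exactly the recipe you arrive at (the $j$-th component records the restriction of the generator to $X_j$), but it must then verify by hand that $\nu$ extends to a well-defined injective homomorphism; this is done by computing explicitly the action of an arbitrary word $\sigma_{(a_1,b_1,i_1)}^{\varepsilon_1}\cdots\sigma_{(a_r,b_r,i_r)}^{\varepsilon_r}$ on a point $(x,y,j)$ and matching it, term by term, against the corresponding product in $(W\rtimes A)^{I}$ --- a computation that occupies essentially the whole proof. You sidestep that by recognizing the map as the restriction homomorphism $\mathcal{G}\to\prod_{i\in I}\sym(X_i)$ attached to the $\mathcal{G}$-invariant decomposition $X(A,B,I)=\bigcup_i X_i$ from Theorem~\ref{solution}(iii), so that well-definedness is automatic and injectivity is immediate from the fact that the $X_i$ cover $X(A,B,I)$; what remains is only the local observation that each generator restricted to $X_i$ translates the $A$-coordinate and shifts the $B$-coordinate by a function of the $A$-coordinate valued in $\langle\varphi_1(A)\rangle$, and that the permutations of $B\times A$ of this form constitute a subgroup isomorphic to $\langle\varphi_1(A)\rangle\bar\wr A$. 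Do write out that last closure-and-composition check with a fixed convention (the choice $(f,t)\colon(b,a)\mapsto(b+f(a+t),a+t)$ is the one matching the twisting $(hf)(x)=f(x-h)$ recalled before the proposition); it is the only spot where a sign or convention could slip. The trade-off is that the paper's explicit formula for the action of a word is reused in later arguments, so its longer computation is not wasted globally, whereas your argument isolates exactly what this proposition needs. The deduction of the finite $p$-group statement is the same in both.
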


\begin{proof}
Let $B_1=\langle\varphi_1(A)\rangle$. Let $W$ be the set of all
functions $f:A\longrightarrow B_1$. Consider the set $S=\{
\sigma_{(a,b,i)}\mid a\in A,\; b\in B,\; i\in I\}$. Denote the
Cartesian product of $|I|$ copies of $W\rtimes A$ by $(W\rtimes
A)^{I}$ and its elements by $((f_j,a_j))_{j\in I}$, with $f_j\in W$
and $a_j\in A$. For each $a\in A$, let $f_a\in W$ denote the map
defined by $f_a(x)=\varphi_1(a-x)$, for all $x\in A$. We define a
map $\nu\colon S\longrightarrow (W\rtimes A)^{I}$ by
$\nu(\sigma_{(a,b,i)})=((f_j,a_j))_{j\in I}$, where
$$f_j=\left\{ \begin{array}{ll}
f_a&\mbox{ if }j=i,\\
0&\mbox{ if } j\in I\setminus\{ i\},
\end{array} \right. \quad \mbox{and}\quad a_j=\left\{ \begin{array}{ll}
0&\mbox{ if }j=i,\\
\varphi_2(b)&\mbox{ if } j\in I\setminus\{ i\}.
\end{array} \right.$$

We claim that $\nu$ can be extended to an injective homomorphism
$$\nu\colon\mathcal{G}(X(A,B,I),r)\longrightarrow (W\rtimes A)^{I}.$$
Let $a_1,\dots ,a_r\in A$, $b_1,\dots, b_r\in B$, $i_1,\dots ,i_r\in
I$ and $\varepsilon_1,\dots,\varepsilon_r\in \{ -1,1\}$. To prove
the claim it is enough to prove that
$$\sigma_{(a_1,b_1,i_1)}^{\varepsilon_1}\cdots \sigma_{(a_r,b_r,i_r)}^{\varepsilon_r}=\id_{X(A,B,I)}$$
if and only if
$$((f_{r,j},a_{r,j})^{-\varepsilon_r})_{j\in I}\cdots ((f_{1,j},a_{1,j})^{-\varepsilon_1})_{j\in
I}=((0,0))_{j\in I},$$ where
$$f_{k,j}=\left\{ \begin{array}{ll}
f_{a_k}&\mbox{ if }j=i_k,\\
0&\mbox{ if } j\in I\setminus\{ i_k\},
\end{array} \right. \quad \mbox{and}\quad a_{k,j}=\left\{ \begin{array}{ll}
0&\mbox{ if }j=i_k,\\
\varphi_2(b_k)&\mbox{ if } j\in I\setminus\{ i_k\}.
\end{array} \right.$$
We know that
\begin{eqnarray*}
\lefteqn{((f_{r,j},a_{r,j})^{-\varepsilon_r})_{j\in I}\cdots
((f_{1,j},a_{1,j})^{-\varepsilon_1})_{j\in I}}\\
&=&\left(\left(\left(\frac{-\varepsilon_r-1}{2}a_{r,j}\right)(-\varepsilon_rf_{r,j}),-\varepsilon_ra_{r,j}\right)\right)_{j\in
I}\\
&&\hphantom{xxxxxxxxx}\cdots
\left(\left(\left(\frac{-\varepsilon_1-1}{2}a_{1,j}\right)(-\varepsilon_1f_{1,j}),-\varepsilon_1a_{1,j}\right)\right)_{j\in I}\\
&=&\left(\left(\sum_{l=1}^r\left(\sum_{l<k\leq
r}-\varepsilon_ka_{k,j}\right)\left(\left(\frac{-\varepsilon_l-1}{2}a_{l,j}\right)(-\varepsilon_lf_{l,j})\right),
-\sum_{k=1}^{r}\varepsilon_ka_{k,j}\right)\right)_{j\in I}.
\end{eqnarray*}
On the other hand
\begin{eqnarray*}\lefteqn{\sigma_{(a_1,b_1,i_1)}^{\varepsilon_1}\cdots
\sigma_{(a_r,b_r,i_r)}^{\varepsilon_r}(x,y,j)}\\
&=&\sigma_{(a_1,b_1,i_1)}^{\varepsilon_1}\cdots
\sigma_{(a_{r-1},b_{r-1},i_{r-1})}^{\varepsilon_{r-1}}(x+(1-\delta_{i_r,j})\varepsilon_r\varphi_2(b_r),\\
&&\hphantom{xxxxxxxxxxxxxxxxxxxxxxx}y+\delta_{i_r,j}\varepsilon_r\varphi_1(a_r-x),j)\\
&=&\sigma_{(a_1,b_1,i_1)}^{\varepsilon_1}\cdots
\sigma_{(a_{r-2},b_{r-2},i_{r-2})}^{\varepsilon_{r-2}}\left(x+\sum_{k=r-1}^r(1-\delta_{i_k,j})\varepsilon_k\varphi_2(b_k)\right.,\\
&&\hphantom{xxxxxxxx}\left.y+\sum_{l=r-1}^r\delta_{i_l,j}\varepsilon_l\varphi_1\left(a_l-x-\sum_{l<k\leq r}(1-\delta_{i_k,j})\varepsilon_k\varphi_2(b_k)\right),j\right)\\
&=& \\ &\vdots & \\ &=&\left(x+\sum_{k=1}^r(1-\delta_{i_k,j})\varepsilon_k\varphi_2(b_k),\right.\\
&&\quad\left.
y+\sum_{l=1}^r\delta_{i_l,j}\varepsilon_l\varphi_1\left(a_l-x-\sum_{l<k\leq
r}(1-\delta_{i_k,j})\varepsilon_k\varphi_2(b_k)\right),j\right),
\end{eqnarray*}
where $\delta_{i,j}$ is the Kronecker delta, that is
$$\delta_{i,j}=\left\{
\begin{array}{ll}
1&\mbox{ if } i=j,\\
0&\mbox{ if } i\neq j.
\end{array}\right.$$
Hence
$$\sigma_{(a_1,b_1,i_1)}^{\varepsilon_1}\cdots \sigma_{(a_r,b_r,i_r)}^{\varepsilon_r}=\id_{X(A,B,I)}$$
if and only if
$$\sum_{k=1}^r(1-\delta_{i_k,j})\varepsilon_k\varphi_2(b_k)=0$$
and
$$\sum_{l=1}^r\delta_{i_l,j}\varepsilon_l\varphi_1\left(a_l-x-\sum_{l<k\leq
r}(1-\delta_{i_k,j})\varepsilon_k\varphi_2(b_k)\right)=0,$$ for all
$j\in I$ and all $x\in A$. Note that
$$\sum_{k=1}^{r}\varepsilon_ka_{k,j}=\sum_{k=1}^r(1-\delta_{i_k,j})\varepsilon_k\varphi_2(b_k)$$
and
\begin{eqnarray*}\lefteqn{\left( \sum_{l=1}^r\left(\sum_{l<k\leq r}-\varepsilon_ka_{k,j}\right)\left(\left(\frac{-\varepsilon_l-1}{2}a_{l,j}\right)(-\varepsilon_lf_{l,j})\right)\right)(x)}\\
&=&-\sum_{l=1}^r\varepsilon_lf_{l,j}\left(x-\frac{-\varepsilon_l-1}{2}a_{l,j}-\sum_{l<k\leq
r}-\varepsilon_ka_{k,j}\right)\\
&=&-\sum_{l=1}^r\delta_{i_l,j}\varepsilon_l\varphi_1\left(a_l-x-\frac{\varepsilon_l+1}{2}a_{l,j}-\sum_{l<k\leq
r}\varepsilon_ka_{k,j}\right)\\
&=&-\sum_{l=1}^r\delta_{i_l,j}\varepsilon_l\varphi_1\left(a_l-x-(1-\delta_{i_l,j})\frac{\varepsilon_l+1}{2}\varphi_2(b_l)\right.\\
&&\hphantom{xxxxxxxxx}-\left.\sum_{l<k\leq
r}(1-\delta_{i_k,j})\varepsilon_k\varphi_2(b_{k})\right)\\
&=&-\sum_{l=1}^r\delta_{i_l,j}\varepsilon_l\varphi_1\left(a_l-x-\sum_{l<k\leq
r}(1-\delta_{i_k,j})\varepsilon_k\varphi_2(b_{k})\right),
\end{eqnarray*}
where, in the last equality, the term
$(1-\delta_{i_l,j})\frac{\varepsilon_l+1}{2}\varphi_2(b_l)$
disappears because, when $(1-\delta_{i_l,j})=1$, then
$\delta_{i_l,j}=0$, whence the term
$$\delta_{i_l,j}\varepsilon_l\varphi_1\left(a_l-x-(1-\delta_{i_l,j})\frac{\varepsilon_l+1}{2}\varphi_2(b_l)-\sum_{l<k\leq
r}(1-\delta_{i_k,j})\varepsilon_k\varphi_2(b_{k})\right)$$ becomes
zero, and does not appear in the sum. Therefore the claim is proved.
\end{proof}

With some additional hypothesis, we can determine precisely the
permutation group of $(X(A,B,I),r)$. Note that the next result gives
examples of solutions of the YBE with permutation group of
arbitrarily large nilpotency class.

\begin{corollary}\label{cyclic}
Assume that $A=B=\Z/(k)$, $k>1$, $I$ is a finite set such that
$\gcd(|I|-1,k)=1$, $\varphi_2$ is surjective, $\varphi_1(0)=0$ and
$\varphi_1(x)=1$ for any $x\in A\setminus\{0\}$. Then,
$\mathcal{G}(X(A,B,I),r)\cong (A\wr A)^{\mid I\mid}$.

In this case, the derived length of $\mathcal{G}(X(A,B,I),r)$ is
$2$. The permutation group $\mathcal{G}(X(A,B,I),r)$ is nilpotent if
and only if $k=p^\alpha$ for some prime $p$, and, in this case, its
nilpotency class is equal to $(\alpha (p-1)+1)p^{\alpha-1}.$ In
particular, if $A$ is of prime order $p$ then the nilpotency class
is $p$.
\end{corollary}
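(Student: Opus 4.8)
The plan is to leverage Proposition~\ref{permutation}. Under the present hypotheses $\varphi_1(A)=\{0,1\}$ generates $B=A=\Z/(k)$, so $\langle\varphi_1(A)\rangle=A$, and since $A$ is finite the complete wreath product coincides with $A\wr A$; hence the homomorphism $\nu$ of Proposition~\ref{permutation} embeds $\mathcal{G}(X(A,B,I),r)$ into $(A\wr A)^{|I|}$, and what remains for the first assertion is to show $\nu$ is onto. Write $A\wr A=W\rtimes A$, where $W$ is the additive group of all maps $A\to A$. I would show that, for every $i\in I$, the $i$-th direct factor $W\rtimes A$ (placed trivially in the remaining coordinates) lies in $\im\nu$; since these $|I|$ subgroups generate $(A\wr A)^{|I|}$, this yields $\mathcal{G}(X(A,B,I),r)\cong(A\wr A)^{|I|}$.

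For the base part, $\nu(\sigma_{(a,0,i)})$ has entry $(f_a,0)$ in coordinate $i$ and $(0,0)$ in every other coordinate (because $\varphi_2(0)=0$), where $f_a(x)=\varphi_1(a-x)$ is the constant function $1$ minus the indicator of $a$. Then $f_a-f_{a'}$ is the indicator of $a'$ minus that of $a$, so the $f_a-f_{a'}$ generate the kernel of the homomorphism $W\to A$, $f\mapsto\sum_{x\in A}f(x)$, while $\sum_{x\in A}f_0(x)=k-1\equiv-1$ is a generator of $\Z/(k)$; hence $\{f_a:a\in A\}$ generates all of $W$, and the whole $i$-th base factor lies in $\im\nu$. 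For the top part, $\nu(\sigma_{(0,b,i)})$ has $(f_0,0)$ in coordinate $i$ and $(0,\varphi_2(b))$ elsewhere, so multiplying by $\nu(\sigma_{(0,0,i)})^{-1}$ produces an element $\tau_{v,i}\in\im\nu$ that is trivial in coordinate $i$ and equal to $(0,v)$ in the other coordinates, with $v=\varphi_2(b)$ arbitrary in $A$ since $\varphi_2$ is surjective. The product $\prod_{i\in I}\tau_{v,i}$ is the diagonal element carrying $(0,(|I|-1)v)$ in every coordinate; as $\gcd(|I|-1,k)=1$, the entry $(|I|-1)v$ runs over all of $A$, so every constant diagonal element belongs to $\im\nu$, and multiplying such a diagonal by the appropriate $\tau_{v,i}^{-1}$ isolates $(0,v)$ in coordinate $i$. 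Since $W\rtimes A$ is generated by $W\times\{0\}$ and $\{0\}\times A$, the $i$-th factor is thus entirely inside $\im\nu$.

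For the remaining claims we work with $(A\wr A)^{|I|}=(\Z/(k)\wr\Z/(k))^{|I|}$. The derived subgroup of $A\wr A$ is contained in the abelian base $W$, so $A\wr A$ is metabelian; it is nonabelian because $k>1$; hence its derived length, and that of any finite direct power, is exactly $2$. By the classical criterion for nilpotency of wreath products, $G\wr H$ with $H$ finite nontrivial and $G$ nontrivial is nilpotent if and only if the finite groups $G$ and $H$ are $p$-groups for one and the same prime $p$ (the non-nilpotent direction being the standard observation on the coordinate-permuting action of $H$). Taking $G=H=\Z/(k)$, and using that nilpotency and nilpotency class are preserved under passing to finite direct powers, $\mathcal{G}(X(A,B,I),r)$ is nilpotent exactly when $k=p^\alpha$, in which case its class equals that of $\Z/(p^\alpha)\wr\Z/(p^\alpha)$.

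Finally, to compute that class: for $G=W\rtimes\langle t\rangle$ with $W$ abelian normal and $\langle t\rangle$ cyclic, a routine commutator computation gives $\gamma_2(G)=(t-1)W$ and then $\gamma_{n+1}(G)=(t-1)^nW$ by induction, so the nilpotency class of $G$ is the least $c$ with $(t-1)^cW=0$. For $G=\Z/(p^\alpha)\wr\Z/(p^\alpha)$ the module $W$ is free of rank one over $\Lambda:=\Z/(p^\alpha)[C_{p^\alpha}]$, so the class equals the nilpotency index of the augmentation ideal $\Delta=(y)$ of $\Lambda=\Z/(p^\alpha)[y]/\big((1+y)^{p^\alpha}-1\big)$, i.e.\ the least $c$ with $y^c=0$. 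Proving that this $c$ equals $(\alpha(p-1)+1)p^{\alpha-1}$ is the crux and the step I expect to be the main obstacle: one reduces powers of $y$ through the relation $(1+y)^{p^\alpha}=1$, keeping track of $p$-divisibility via $v_p\binom{p^\alpha}{m}=\alpha-v_p(m)$, and the delicate point is controlling the interaction of the various levels $y^p,y^{p^2},\dots,y^{p^\alpha}$ (whose defining coefficients have $p$-valuations $\alpha-1,\alpha-2,\dots,0$) so as to read off exactly which powers of $y$ survive. Equivalently, one may analyze $y^c$ inside $\Z_p[C_{p^\alpha}]=\prod_{j=0}^{\alpha}\Z_p[\zeta_{p^j}]$ modulo $p^\alpha$, where, beyond the naive requirement $c\ge\alpha(p-1)p^{\alpha-1}$ coming from the top cyclotomic component, an extra $p^{\alpha-1}$ is forced by the conductor of $\Z_p[C_{p^\alpha}]$ in its normalization $\prod_j\Z_p[\zeta_{p^j}]$. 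The case $k=p$ ($\alpha=1$) gives $\Lambda=\mathbb{F}_p[y]/(y^p)$ and class $p$, as asserted.
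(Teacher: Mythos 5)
Your proof of the isomorphism $\mathcal{G}(X(A,B,I),r)\cong (A\wr A)^{|I|}$ is correct and takes a somewhat cleaner route than the paper's: where the paper establishes that $\{f_a\}$ generates $W$ and that the $s_j(f,b)$ generate $(A\wr A)^{|I|}$ by solving explicit linear systems with the ``all ones off the diagonal'' matrix $N_k$ (whose determinant $(-1)^{k-1}(k-1)$ must be a unit), you observe directly that $f_a-f_{a'}=e_{a'}-e_a$ generate the kernel of the coordinate-sum map $W\to A$ while $\sum_x f_0(x)=-1$ is a unit, and you isolate the top components via the elements $\tau_{v,i}$ and the diagonal $(0,(|I|-1)v)$, which is exactly where $\gcd(|I|-1,k)=1$ enters. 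The derived length and the nilpotency criterion (Baumslag) also match the paper.

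However, there is a genuine gap in the last part. The paper does not compute the nilpotency class of $\Z/(p^\alpha)\wr\Z/(p^\alpha)$ from scratch: it invokes Liebeck's theorem (H.~Liebeck, \emph{Concerning nilpotent wreath products}, Proc.\ Cambridge Philos.\ Soc.\ 58 (1962), Theorem 5.1), which gives the class of $G\wr H$ for finite abelian $p$-groups as $(n-1)(p-1)p^{\beta_1-1}+1+\sum_i(p^{\beta_i}-1)$, and specializes it. Your reduction of the class to the nilpotency index of the augmentation ideal of $\Lambda=\Z/(p^\alpha)[C_{p^\alpha}]$ (via $\gamma_{n+1}=(t-1)^nW$ and the freeness of $W$ over $\Lambda$) is correct, but you then explicitly stop short of proving that the least $c$ with $y^c=0$ in $\Z/(p^\alpha)[y]/\bigl((1+y)^{p^\alpha}-1\bigr)$ equals $(\alpha(p-1)+1)p^{\alpha-1}$; you only describe two strategies (valuation bookkeeping with $\binom{p^\alpha}{m}$, or the conductor of $\Z_p[C_{p^\alpha}]$ in $\prod_j\Z_p[\zeta_{p^j}]$) and identify this as the main obstacle. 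As it stands the claimed value of the class, and in particular the value $p$ for $A$ of prime order with $\alpha=1$ aside, is not established for $\alpha>1$. To close the gap you should either carry out that ring-theoretic computation in full or, as the paper does, cite Liebeck's formula.
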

\begin{proof}
First, observe that $\langle\varphi_1(A)\rangle=A$.  By
Proposition~\ref{permutation} (and its proof), we know that
$\nu:\mathcal{G}(X(A,B,I),r)\longrightarrow (A\wr A)^{|I|}$ is
injective. So,  in order to prove the first claim it is enough to
show that $\nu$ is surjective. As before, denote by $f_{a}$, $a\in
A$, the map given by $f_{a}(c)=\varphi_1(a-c)$, for $c\in A$. First
we prove that, by definition of $\varphi_1$, the set $\{f_{a}: a\in
A\}$ generates the abelian group $W$ consisting of the  maps from
$A$ to $A$. Indeed, if $f$ is any map from $A$ to itself, we have to
find $z_{a}\in \Z$, $a\in A$, such that $f(c)=\sum_{a\in A} z_{a}
f_{a}(c)$ for any $c\in A$. Observe that $\sum_{a\in A} z_{a}
f_{a}(c)=\sum_{a\in A\setminus\{c\}} z_a$ by the definition of
$\varphi_1$. This is a system of linear equations with coefficients
in $\Z/(k)$ with $k$ equations in $k$ variables. The associated
matrix of the system is
$$
N_k=
\begin{pmatrix}
0& 1& \cdots & 1\\
1& 0& \ddots& \vdots\\
\vdots& \ddots& \ddots& 1\\
1& \cdots& 1& 0\\
\end{pmatrix}
\in M_k(\Z/(k)).
$$
One can prove that $\det(N_k)=(-1)^{k-1}(k-1)$, which is invertible in $\Z/(k)$,
so the system has a solution.

Second, we prove that $S=\{s_j(f,b):j\in I,b\in A, f\in W\}$, where
$s_j(f,b)=((f_i,b_i))_{i\in I}$ is an element of $(A\wr A)^{\mid I\mid}$ defined by
$$(f_{i},b_i)=\left\{ \begin{array}{ll}
(f,0)&\mbox{ if }i=j,\\
(0,b)&\mbox{ if } i\in I\setminus\{ j\},
\end{array} \right.$$
is a set of generators of $(A\wr A)^{\mid I\mid}$. Given an
arbitrary element $((f'_i,c_i))_{i\in I}$ of $(A\bar\wr A)^{\mid
I\mid}$, consider the equations $\sum_{i\in I\setminus\{j\}}
x_i=c_j$, one for each $j\in I$, in the variables $\{x_i\}_{i\in
I}$. Then this is a system of $| I|$ linear equations in $| I|$
variables, and its associated matrix is $N_{| I|}$. We know that
$\det(N_{\mid I\mid})=(-1)^{\mid I\mid-1}(| I|-1)$, so the system
has a solution since, by the hypothesis, $\gcd(|I|-1,k)=1$. Thus
there exist elements $\{b_i\}_{i\in I}$ in $A$ such that
$(\sum_{i\in I} b_i)-b_j=c_j$ for any $j\in I$. Define functions
$f_j$, $j\in I$, by $f_j(c)=f'_j(c+\sum_{1\leq k<j} b_k)$ for any
$c\in A$. Then,
\begin{eqnarray*}
\lefteqn{s_1(f_1,b_1)\cdot s_2(f_2,b_2)\cdots s_{|I|}(f_{|I|},b_{|I|})}\\
&=&((f_1,0),(0,b_1),\dots,(0,b_1))\cdot((0,b_2),(f_2,0),(0,b_2),\dots,(0,b_2))\cdots \\
&&\lefteqn{\cdot((0,b_{|I|}),\dots,(0,b_{|I|}),(f_{|I|},0))}\\
&=&\left((f_1,\sum_{i\in I} b_i-b_1),(b_1f_2,\sum_{i\in I} b_i-b_2),\dots,\right.\\
&&\hphantom{xxxxxx}\left.((b_1+\cdots +b_{|I|-1})f_{|I|},\sum_{i\in I} b_i-b_{|I|})\right)\\
&=&((f'_i,c_i))_{i\in I},
\end{eqnarray*}
showing that $S$ is a set of generators of $(A\wr A)^{\mid I\mid}$,
as claimed.

Thus, to prove that $\nu$ is surjective, it is enough to show that,
for any $s_i(f,b)$ in $S$, there exists a $\tau\in
\mathcal{G}(X(A,B,I),r)$ such that $\nu(\tau)=s_i(f,b)$. By what we
checked at the beginning of this proof, there exist integers $z_a$,
$a\in A$, such that $f=\sum_{a\in A} z_a f_{a}$. Assume
$A=\{a_1,\dots,a_k\}$ with $a_1=0$, and denote $z_i=z_{a_i}$. On the
other hand, choose $a\in A$ such that $\varphi_2(a)=b$ (using here
that $\varphi_2$ is surjective). Then, define the element
$$
\tau=\sigma_{(0,0,i)}^{z_1-1}\sigma_{(a_2,0,i)}^{z_2}\cdots\sigma_{(a_k,0,i)}^{z_k}\sigma_{(0,a,i)}\in
\mathcal{G}(X(A,B,I),r).
$$
Note that, by definition of $\nu$, $\nu(\sigma_{(a,b,i)})=s_i(f_a,\varphi_2(b))$.
Moreover, $s_i(f,a)\cdot s_i(g,b)=s_i(f+g,a+b)$.
These two facts explain the following computation:
\begin{eqnarray*}
\nu(\tau)&=&\lefteqn{\nu(\sigma_{(0,0,i)})^{z_1-1}\cdot\nu(\sigma_{(a_2,0,i)})^{z_2}\cdots\nu(\sigma_{(a_k,0,i)})^{z_k}\cdot\nu(\sigma_{(0,a,i)})}\\
&=&s_i(f_0,0)^{z_1-1}\cdot s_i(f_{a_2},0)^{z_2}\cdots s_i(f_{a_k},0)^{z_k}\cdot s_i(f_0,\varphi_2(a))\\
&=&s_i\left((z_1-1)f_0+\sum_{i=2}^{k}z_{i}f_{a_i}+f_0,\varphi_2(a)\right)\\
&=&s_i(f,b).
\end{eqnarray*}
Hence, this finally shows that $\nu$ is surjective.

At this point, we have proved that $\mathcal{G}(X(A,B,I),r)\cong
(A\wr A)^{\mid I\mid}$. Observe that $A\wr A$ is a semidirect
product of two abelian groups, so its derived length is $2$. It
follows that $(A\wr A)^{\mid I\mid}$ also has  derived length equal
to $2$. Concerning the nilpotency of $\mathcal{G}(X(A,B,I),r)$,
recall that a direct product of groups $G\times H$ is nilpotent if
and only if $G$ and $H$ are nilpotent. Besides, the results of
\cite{baumslag} imply that a wreath product of two finite groups $G$
and $H$ is nilpotent if and only if $G$ and $H$ are $p$-groups for
the same prime $p$. So, in our case, $(A\wr A)^{\mid I\mid}$ is
nilpotent if and only if $A=\Z/(p^\alpha)$ for some prime $p$.

Moreover, it is possible to compute the nilpotency class of this
wreath product. We use the following known result (see
\cite[Theorem 5.1]{liebeck}): the nilpotency class of $G\wr H$,
where $G$ and $H$ are finite abelian $p$-groups such that $G$ has
exponent $p^n$ and $H\cong
\Z/(p^{\beta_1})\times\cdots\times\Z/(p^{\beta_m})$ with
$\beta_1\geq \cdots\geq \beta_m$, is equal to
$$
(n-1)(p-1)p^{\beta_1-1}+1+\sum_{i=1}^m (p^{\beta_i}-1).
$$
Applying this result to $G=H=A=\Z/(p^\alpha)$, we get that the
nilpotency class of $A\wr A$ is equal to
$$
(\alpha p-\alpha+1)p^{\alpha-1}.
$$
The nilpotency class of $(A\wr A)^{\mid I\mid}$ is also equal to $(\alpha p-\alpha+1)p^{\alpha-1}$
because the class of a direct product $G\times H$ is equal to the maximum of the class of $G$ and the class of $H$.

Observe that, in particular, for $A=\Z/(p)$, we obtain nilpotency
class equal to $p$.
\end{proof}

By Lemma~\ref{socle}, we know that $\soc(\mathcal{G}(X(A,B,I),r))=\{
1\}$ if $(X(A,B,I),r)$ is irretractable. By Theorem~\ref{solution},
this happens if $\varphi_1^{-1}(0)=\{ 0\}$ and $\varphi_2$ is
injective. In view of the strategy explained in the introduction, it
would be interesting to know  whether under some conditions on $A$,
$B$, $I$, $\varphi_1$ and $\varphi_2$, the left brace
$\mathcal{G}(X(A,B,I),r)$ can be simple.

We do not know any new simple left brace of the form
$\mathcal{G}(X(A,B,I),r)$ and it seems difficult to study the ideal
structure of $\mathcal{G}(X(A,B,I),r)$ in general. The next result
shows that there are some non-trivial ideals in some cases where the
socle is trivial. Although these left braces are not simple, maybe
they can be used to construct new families of simple left braces
with the techniques used in \cite{B3}, because they have trivial
socle.

\begin{proposition}
Assume that $A=B=\Z/(k)$, $k>1$, $I$ is a finite set such that
$\gcd(|I|-1,k)=1$, $\varphi_2$ is surjective, $\varphi_1(0)=0$ and
$\varphi_1(x)=1$ for any $x\in A\setminus\{0\}$. Then
$\mathcal{G}(X(A,B,I),r)$ is not a simple left brace.
\end{proposition}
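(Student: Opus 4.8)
The plan is to produce an explicit nontrivial proper ideal of the left brace $\mathcal{G}:=\mathcal{G}(X(A,B,I),r)$. Under the stated hypotheses one has $\varphi_1(A)=\{0,1\}$, which generates $B=\Z/(k)$, and $\varphi_2$ is surjective, so Theorem~\ref{solution}(iv) applies and the orbits of $\mathcal{G}$ on $X(A,B,I)$ are exactly the sets $X_i$. Hence the subset $H$ of Lemma~\ref{ideal} is well defined and is an ideal of the left brace $G:=G(X(A,B,I),r)$. Since the canonical map $\phi\colon G\to\mathcal{G}$ is a surjective homomorphism of left braces, I would first observe that $\phi(H)$ is automatically an ideal of $\mathcal{G}$: it is normal in the multiplicative group, and it is $\lambda$-invariant because $\lambda_{\phi(g)}(\phi(h))=\phi(\lambda_g(h))\in\phi(H)$ for all $g\in G$ and $h\in H$, using that $H$ is $\lambda$-invariant and $\phi$ is onto. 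It then remains to check that $\phi(H)$ is neither $\{1\}$ nor all of $\mathcal{G}$; recall that $\soc(\mathcal{G})=\{1\}$ by Lemma~\ref{socle} together with Theorem~\ref{solution}(ii), so this genuinely yields a nonsimple brace with trivial socle.

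For nontriviality, fix $i\in I$ and take $h=(1,0,i)(0,0,i)^{-1}\in G_i$, where triples denote the corresponding generators of $G$. Then $\deg(h)=1+(-1)=0$, so $h\in H$, while $\phi(h)=\sigma_{(1,0,i)}\sigma_{(0,0,i)}^{-1}$ maps $(0,0,i)$ — which is fixed by $\sigma_{(0,0,i)}$ since $\varphi_1(0)=0$ — to $\sigma_{(1,0,i)}(0,0,i)=(0,\varphi_1(1),i)=(0,1,i)\neq(0,0,i)$. Hence $\phi(h)\neq\id_{X(A,B,I)}$ and $\phi(H)\neq\{1\}$.

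For properness I would use the injective homomorphism $\nu\colon\mathcal{G}\to (W\rtimes A)^{|I|}$ of Proposition~\ref{permutation}, where $W$ is the group of all maps $A\to A$ (here $\langle\varphi_1(A)\rangle=A$). Writing $\mathbf{1}$ for the constant map $1$ and $e_a$ for the indicator of $\{a\}$, the maps $f_a$ appearing in $\nu$ satisfy $f_a=\mathbf{1}-e_a$. Put $W_0=\{w\in W\mid\sum_{c\in A}w(c)=0\}$; this is a shift-invariant subgroup of $W$, so $(W_0\rtimes A)^{|I|}$ is a subgroup of $(W\rtimes A)^{|I|}$. The key observation is that the degree-zero condition defining $H$ forces every wreath-product function-coordinate of $\nu(\phi(h))$, for $h\in H$, to lie in $W_0$: if $g_l\in G_{i_l}$ is a word $\prod_t\sigma_{(a_t,b_t,i_l)}^{\varepsilon_t}$ with $\sum_t\varepsilon_t=0$, then the $i_l$-th coordinate of $\nu(\phi(g_l))$ has function part $\sum_t\varepsilon_t f_{a_t}=-\sum_t\varepsilon_t e_{a_t}\in W_0$, and all other coordinates have zero function part; multiplying such elements stays inside $(W_0\rtimes A)^{|I|}$. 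Thus $\nu(\phi(H))\subseteq (W_0\rtimes A)^{|I|}$. On the other hand the $i$-th coordinate of $\nu(\sigma_{(1,0,i)})$ has function part $f_1$ with $\sum_{c\in A}f_1(c)=k-1\not\equiv0\pmod k$ (as $k>1$), so $f_1\notin W_0$ and $\nu(\sigma_{(1,0,i)})\notin (W_0\rtimes A)^{|I|}$; by injectivity of $\nu$ this gives $\sigma_{(1,0,i)}\notin\phi(H)$, hence $\phi(H)\neq\mathcal{G}$ and $\mathcal{G}$ is not simple.

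The main obstacle is pinning $\phi(H)$ down well enough to separate it from $\mathcal{G}$; the clean route is to transport $\phi(H)$ through $\nu$ and recognize the degree-zero constraint as vanishing of the augmentation $\sum_c f(c)$ on the relevant wreath-product coordinates. (With slightly more work, using $\gcd(|I|-1,k)=1$ as in the proof of Corollary~\ref{cyclic}, one obtains the exact identity $\phi(H)=(W_0\rtimes A)^{|I|}$, but this is not needed for nonsimplicity.)
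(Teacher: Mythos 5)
Your proof is correct, and while it shares its skeleton with the paper's argument, the decisive step is handled by a genuinely different and lighter mechanism. Like the paper, you take the ideal $H$ of Lemma~\ref{ideal} (legitimately, via Theorem~\ref{solution}(iv)), push it forward to $\phi(H)\subseteq\mathcal{G}(X(A,B,I),r)$, and certify nontriviality with the same witness, the image of a degree-zero element of one $G_i$ that moves $(0,0,i)$. Where you diverge is in proving properness. The paper establishes the exact identity $\ker(\phi)H=\mathrm{gr}((a,b,i)^k\mid a,b\in A,\ i\in I)\,H$ by a long explicit computation (solving linear systems over $\Z/(k)$, which is where $\gcd(|I|-1,k)=1$ and the surjectivity of $\varphi_2$ enter), and then excludes $\sigma_{(0,0,i_1)}$ by a degree count. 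You instead transport $\phi(H)$ through the injective homomorphism $\nu$ of Proposition~\ref{permutation} and observe that the augmentation $f\mapsto\sum_{c\in A}f(c)$ on each function coordinate is a shift-invariant homomorphism killing every $\nu(\phi(g_l))$ with $\deg(g_l)=0$ (since each $f_{a}=\mathbf{1}-e_{a}$ has augmentation $k-1$, and within a fixed coordinate $i_l$ the $A$-components vanish so no shifts even occur), while $\nu(\sigma_{(1,0,i)})$ has augmentation $k-1\neq 0$. This buys you a shorter, more conceptual separation that does not use $\gcd(|I|-1,k)=1$ at all for the properness step, so your argument in fact proves non-simplicity without that hypothesis; what it gives up is the precise description of $\ker(\phi)H$ (equivalently of $\phi(H)$ itself), which the paper's computation yields and which, as you note, can be recovered with the extra work of Corollary~\ref{cyclic}. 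Both arguments rely on $\soc(\mathcal{G})$ being irrelevant to the construction of the ideal; your remark that $\phi$ is a surjective brace homomorphism, so that $\phi(H)$ is automatically an ideal, makes explicit a point the paper leaves implicit.
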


\begin{proof}
Let $I=\{i_1,\dots ,i_n\}$ with $|I|=n$. Let $G_i$ be the subgroup
of the structure group $G(X(A,B,I),r)$ generated by $\{(a,b,i)\mid
a,b\in A\}$. By Theorem~\ref{constr}$(iv)$, the sets $\{(a,b,i)\mid
a,b\in A\}$ are the orbits of $X(A,B,I)$ under the action of
$\mathcal{G}(X(A,B,I),r)$. Thus, by Lemma~\ref{orbits}, every
element $g$ of $G(X(A,B,I),r)$ can be written uniquely as
$g=g_{1}\cdots g_{n}$, for some $g_{l}\in G_{i_l}$. Let $H=\{
g_1\cdots g_n\mid g_l\in G_{i_l} \mbox{ and } \deg(g_l)=0 \}$. By
Lemma~\ref{ideal}, $H$ is an ideal of  the left brace
$G(X(A,B,I),r)$.

Let $\phi\colon G(X(A,B,I),r)\longrightarrow
\mathcal{G}(X(A,B,I),r)$ be the natural map. We shall prove that
$\phi(H)$ is a non-trivial proper ideal of the left brace
$\mathcal{G}(X(A,B,I),r)$.

Note that
$$\sigma_{(a,b,i)}^{k}(x,y,j)=(x+(1-\delta_{i,j})k\varphi_2(b),y+\delta_{i,j}k\varphi_1(a-x),j)=(x,y,j),$$
for all $a,b,x,y\in A$ and all $i,j\in I$. Hence
$\phi((a,b,j)^k)=\sigma_{(a,b,i)}^k=\id$. We claim that
\begin{eqnarray}\label{hker}
&&\ker(\phi)H=\mathrm{gr}((a,b,i)^k\mid a,b\in A,\; i\in I)H.
\end{eqnarray}
It is clear that $\mathrm{gr}((a,b,i)^k\mid a,b\in A,\; i\in
I)H\subseteq \ker(\phi)H$. Let $g\in \ker(\phi)$. By
Lemma~\ref{orbits}, there exist unique $g_1,\dots, g_l\in
G(X(A,B,I),r)$ such that $g=g_1\cdots g_n$ and $g_l\in G_{i_l}$, for
$l=1,\dots ,n$. Note that
\begin{eqnarray*}
\lefteqn{\sigma_{(a,b,i)}\sigma_{(c,d,i)}(x,y,j)}\\
&=&(x+(1-\delta_{i,j})\varphi_2(d+b),y+\delta_{i,j}(\varphi_1(c-x)+\varphi_1(a-x)),j)\\
&=&\sigma_{(c,d,i)}\sigma_{(a,b,i)}(x,y,j).
\end{eqnarray*}
Therefore $(a,b,i)^{-1}(c,d,i)^{-1}(a,b,i)(c,d,i)\in
\ker(\phi)\cap H$, for all $a,b,c,d\in A$ and all $i\in I$. Hence,
to prove that $g\in \mathrm{gr}((a,b,i)^k\mid a,b\in A,\; i\in
I)H$, we may assume that every $g_l$ is of the form
\begin{eqnarray*}
g_l&=&(0,0,i_l)^{z_{0,0,l}}(0,1,i_l)^{z_{0,1,l}}\cdots
(0,k-1,i_l)^{z_{0,k-1,l}}\\
&&\; (1,0,i_l)^{z_{1,0,l}}(1,1,i_l)^{z_{1,1,l}}\cdots
(1,k-1,i_l)^{z_{1,k-1,l}}\\
&&\; \cdots (k-1,0,i_l)^{z_{k-1,0,l}}(k-1,1,i_l)^{z_{k-1,1,l}}\cdots
(k-1,k-1,i_l)^{z_{k-1,k-1,l}}.
\end{eqnarray*}
Hence
$$\id=\phi(g)=\left(\prod_{p,q=0}^{k-1}\sigma_{(p,q,i_1)}^{z_{p,q,1}}\right)
\cdot
\left(\prod_{p,q=0}^{k-1}\sigma_{(p,q,i_2)}^{z_{p,q,2}}\right)\cdots
\left(\prod_{p,q=0}^{k-1}\sigma_{(p,q,i_n)}^{z_{p,q,n}}\right).$$
Therefore, for every $x,y\in A$ and $j\in I$, we have
\begin{eqnarray*}
(x,y,j)&=&\left(
x+\sum_{l=1}^n(1-\delta_{l,j})\varphi_2\left(\sum_{p,q=0}^{k-1}z_{p,q,l}q\right),\right.\\
&&\qquad\left. y+\sum_{p,q=0}^{k-1}z_{p,q,j}\varphi_1\left(
p-x-\sum_{l=j+1}^{n}\varphi_2\left(\sum_{p',q'=0}^{k-1}z_{p',q',l}q'\right)\right),j\right).
\end{eqnarray*}
Thus
$$\sum_{l=1}^n(1-\delta_{l,j})\varphi_2\left(\sum_{p,q=0}^{k-1}z_{p,q,l}q\right)=0,$$
for all $j\in I$. Since $A$ is finite and $\varphi_2$ is a
surjective endomorphism of $A$, we have that $\varphi_2$ is an
automorphism of $A$, and
$$\sum_{l=1}^n(1-\delta_{l,j})\sum_{p,q=0}^{k-1}z_{p,q,l}q=0,$$
for all $j\in I$. Note that the system of linear equations
$$\sum_{l=1}^n(1-\delta_{l,j})x_l=0,\quad \mbox{ for }j\in I$$
over $A$ has only the trivial solution $x_l=0$ for all $l$, because
$\gcd(|I|-1,k)=1$. Therefore
$$\sum_{p,q=0}^{k-1}z_{p,q,l}q=0,$$
for all $l$. On the other hand, we also have that
$$\sum_{p,q=0}^{k-1}z_{p,q,j}\varphi_1\left(
p-x-\sum_{l=j+1}^{n}\varphi_2\left(\sum_{p',q'=0}^{k-1}z_{p',q',l}q'\right)\right)=0,$$
for all $x\in A$ and all $j\in I$. Therefore
$$\sum_{p,q=0}^{k-1}z_{p,q,j}\varphi_1(p-x)=0,$$
for all  $x\in A$ and all $j\in I$. Since
$\varphi_1(p-x)=1-\delta_{p,x}$, and the system of linear equations
$$\sum_{p=0}^{k-1}(1-\delta_{p,x})t_p=0\quad\mbox{ for } x\in A$$
over $A$ has only the trivial solution $t_p=0$ for all $p\in A$, we
get that
$$\sum_{q=0}^{k-1}z_{p,q,j}=0\in A,$$
for all $p\in A$ and all $j\in I$. Hence  $\deg
(g_l)=\sum_{p,q=0}^{k-1}z_{p,q,l}=kz_l$ for some integer $z_l$. Thus
$(0,0,i_l)^{-kz_l}g_l\in H$, for all $l$. Hence
$$g=g_1\cdots g_n\in \mathrm{gr}((a,b,i)^k\mid a,b\in A,\; i\in
I)H$$ and this proves the claim (\ref{hker}). Now it is clear that
$\sigma_{(0,0,i_1)}\notin\phi(H)$. Hence $\phi(H)$ is a proper ideal
of $\mathcal{G}(X(A,B,I),r)$. Since
$$\sigma_{(1,0,i_1)}^{-1}\sigma_{(0,0,i_1)}(0,0,i_1)=\sigma_{(1,0,i_1)}^{-1}(0,0,i_1)=(0,1,i_1)\neq
(0,0,i_1)$$  and $\id\neq
\sigma_{(1,0,i_1)}^{-1}\sigma_{(0,0,i_1)}\in \phi(H)$, the result
follows.
\end{proof}

Consider now the structure group $G(X(A,B,I),r)$ of the solution
$(X(A,B,I),r)$ of Theorem~\ref{solution}. Suppose that $A$, $B$ and
$I$ are finite. Hence $X(A,B,I)$ is finite and by
\cite[Corollary~8.2.7]{JObook}, $G(X(A,B,I),r)$ is  solvable and a
Bieberbach group, i.e. a finitely generated torsion-free
abelian-by-finite group, see \cite{charlap}. It would be interesting
to characterize when the structure group of a solution is
poly-(infinite cyclic). Recall that a multipermutation  solution of
the YBE on a finite set $X$ has a structure group that is
poly-(infinite cyclic), see \cite[Proposition 8.2.12]{JObook}. It
remains an open question whether the converse holds. Farkas in
\cite[Theorem~23]{F} showed that a Bieberbach group is
poly-(infinite cyclic) if and only if every non-trivial subgroup has
a non-trivial center. This is one of the key ingredients of the
proof of the following result. Another key ingredient is based on
the application of the natural structure of a left brace on the
structure group of a solution of the YBE, explained in
Section~\ref{prelim}.

\begin{theorem}\label{poly}
If $A$, $B$ and $I$ are finite, $\varphi_1(0)=0$, $\varphi_1(A)$
generates $B$ and $\varphi_2$ is an isomorphism, then
$G(X(A,B,I),r)$ is not a poly-(infinite cyclic) group.
\end{theorem}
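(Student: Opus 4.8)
The plan is to use the two ingredients flagged above. First, by \cite[Corollary~8.2.7]{JObook} the group $G:=G(X(A,B,I),r)$ is a Bieberbach group, so by Farkas' theorem \cite[Theorem~23]{F} it is enough to produce a \emph{nontrivial} subgroup $N\le G$ with $Z(N)=\{1\}$. The left brace structure will run the bookkeeping. Put $M:=\soc(G)=\ker(\phi\colon G\to\mathcal{G}(X(A,B,I),r))$; since $\lambda_a=\id$ for $a\in M$ the operations $+$ and $\cdot$ agree on $M$, so $M$ is a free abelian normal subgroup of finite index $|\mathcal{G}(X(A,B,I),r)|$ with $(M,+)\le(G,+)=\Z^{(X(A,B,I))}$, and a one-line computation in the brace gives $gag^{-1}=\lambda_g(a)$ for $a\in M$, $g\in G$. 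Thus conjugation realizes $M$ as the permutation $\Z\mathcal{G}(X(A,B,I),r)$-module on the basis $X(A,B,I)$. By Theorem~\ref{solution}(iii)--(iv) (here the hypotheses $\varphi_1(0)=0$, $\langle\varphi_1(A)\rangle=B$ and the surjectivity of $\varphi_2$ are used) the $\mathcal{G}(X(A,B,I),r)$-orbits on $X(A,B,I)$ are exactly the $X_i$, $i\in I$; so $M^{\mathcal{G}(X(A,B,I),r)}=\bigoplus_{i\in I}\Z e_i$ with $e_i:=\sum_{x\in X_i}x$, and since the action on $X(A,B,I)$ is faithful, $C_G(M)=M$ and $Z(G)=M^{\mathcal{G}(X(A,B,I),r)}$ has rank $|I|\ge 2$. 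So $G$ itself will not do.

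The subgroup I would take uses the homogeneity of the defining relations. Since $r$ preserves the $I$-coordinate, for each $i\in I$ the rule $x\mapsto 1$ for $x\in X_i$, $x\mapsto 0$ otherwise extends to a group homomorphism $\deg_i\colon G\to\Z$ (the two sides of a relation $xy=zt$ have the same number of factors in $X_i$). Set $N:=\bigcap_{i\in I}\ker(\deg_i)$, a normal subgroup with $G/N\cong\Z^{(I)}$. Then $N$ contains the subgroup generated by all $x\,x_0^{-1}$ with $x\in X_i$ and $x_0\in X_i$ a fixed base point, and $N\cap M$ is the free abelian group of those $v\in\soc(G)$ whose coefficient sum over each $X_i$ vanishes; the latter has rank $|X(A,B,I)|-|I|=|I|(|A||B|-1)>0$, so $N\neq\{1\}$.

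To prove $Z(N)=\{1\}$ I would argue in two steps. (a) $C_G(N\cap M)=M$: if $g$ centralizes $N\cap M$ then, since $ga=ag\iff\lambda_g(a)=a$ for $a\in M$, the permutation matrix $\lambda_g$ fixes $(N\cap M)\otimes\mathbb{Q}=\{v\in\mathbb{Q}^{(X(A,B,I))}\mid\deg_i(v)=0\ \forall i\}$ pointwise, hence fixes every $x-x'$ with $x,x'$ in the same $X_i$; but if a basis permutation $\pi$ had $\pi(x)\neq x$ it would fix $\pi(x)-x$, i.e. $\pi^2(x)=2\pi(x)-x$, impossible in a free abelian group, so $\lambda_g=\id$ and $g\in M$. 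Therefore $Z(N)\subseteq C_N(N\cap M)=N\cap M$, and using again the criterion $ga=ag\iff\lambda_g(a)=a$ we get $Z(N)=(N\cap M)^{\phi(N)}$. (b) $\phi(N)$ is transitive on each $X_i$: it suffices to see this for the subgroup generated by the $\sigma_x\sigma_{x_0}^{-1}$, $x\in X(A,B,I)$. Reading off the explicit formulas for the $\sigma$'s, on $X_i$: for $k\neq i$ the element $\sigma_{(a,b,k)}\sigma_{(a_0,b_0,k)}^{-1}$ translates the $A$-coordinate by $\varphi_2(b-b_0)$, which ranges over all of $A$ as $b$ ranges over $B$ since $\varphi_2$ is onto; and $\sigma_{(a,b,i)}\sigma_{(a_0,b_0,i)}^{-1}$ translates the $B$-coordinate of the point with $A$-coordinate $c$ by $\varphi_1(a-c)-\varphi_1(a_0-c)$, and because $\varphi_1(0)=0$ the subgroup of $B$ generated by all such translations is $\langle\varphi_1(A)\rangle=B$. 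Hence the $\phi(N)$-orbits on $X(A,B,I)$ are again the $X_i$, so $M^{\phi(N)}=\bigoplus_{i}\Z e_i$, and therefore $Z(N)=(N\cap M)^{\phi(N)}\subseteq(N\cap M)\cap\bigoplus_i\Z e_i=\{0\}$ because $\deg_i(e_i)=|X_i|\neq0$. By Farkas' theorem, $G(X(A,B,I),r)$ is not poly-(infinite cyclic).

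The main obstacle is step (b): it is the only place where $\varphi_1(0)=0$, $\langle\varphi_1(A)\rangle=B$ and the surjectivity of $\varphi_2$ all really enter, and it needs a careful, if elementary, tracking of how the generators $\sigma_x\sigma_{x_0}^{-1}$ act on the $A$- and $B$-coordinates. The rest---that $\soc(G)$ is free abelian of finite index with conjugation given by $\lambda$, the well-definedness of the $\deg_i$, and the little combinatorial fact in (a)---is routine manipulation within the brace formalism of Section~\ref{prelim}.
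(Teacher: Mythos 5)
Your proof is correct and follows essentially the same route as the paper: both pass to the degree-zero ideal $N=H=\bigcap_{i\in I}\ker(\deg_i)$, apply Farkas' criterion, translate centrality into $\lambda$-fixed-point conditions via the brace structure, and kill the center by exhibiting elements of $N$ acting transitively on each orbit $X_i$ (using $\varphi_1(0)=0$, $\langle\varphi_1(A)\rangle=B$ and the surjectivity of $\varphi_2$ in the same way). The only difference is local: you reduce $Z(N)$ into $\soc(G)$ via the centralizer argument $C_G(N\cap M)=M$, whereas the paper replaces $h\in Z(H)$ by $h^s$ with $s$ the order of $\lambda_h$ and then constructs the transitive elements explicitly.
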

\begin{proof}
 Let $G_i$ be the subgroup
of $G(X(A,B,I),r)$ generated by $\{(a,b,i)\mid a\in A,\; b\in B\}$.
By Theorem~\ref{constr}$(iv)$, the sets $\{(a,b,i)\mid a\in A,\;
b\in B\}$ are the orbits of $X(A,B,I)$ under the action of
$\mathcal{G}(X(A,B,I),r)$. Let $I=\{i_1,\dots ,i_n\}$ with $|I|=n$.
Thus, by Lemma~\ref{orbits}, every element $g$ of $G(X(A,B,I),r)$
can be written uniquely as $g=g_{1}+\cdots +g_{n}$, for some
$g_{l}\in G_{i_l}$. Let $H=\{ g_1+\cdots +g_n\mid g_l\in G_{i_l}
\mbox{ and } \deg(g_l)=0 \}$. By Lemma~\ref{ideal}, $H$ is an ideal
of $G(X(A,B,I),r)$ and it is easy to see that
$G(X(A,B,I),r)/H\cong\Z^n$.

To prove that $G(X(A,B,I),r)$ is not poly-(infinite cyclic), it is
sufficient to show that $Z(H)=\{ 1\}$ (see \cite[Theorem~23]{F}).

We will show now that indeed $Z(H)=\{ 1\}$. Suppose $h\in Z(H)$.
Then $\lambda_h$ has finite order, since $\mathcal{G}(X(A,B,I),r)$
is a finite group. Let $s$ be the order of $\lambda_h$. So $h^s\in
Z(H)$ and $\lambda_{h^s}=\id$. The group $G(X(A,B,I),r)$ is
torsion-free, therefore, replacing $h$ by $h^s$, we may assume that
$\lambda_h=\id$. Let $g\in H$. Then
$$\lambda_g(h)=gh-g=hg-g=\lambda_h(g)+h-g=g+h-g=h.$$

Let $h_l\in G_{i_l}$ be such that $h=h_1+\cdots +h_n$. Then
$\lambda_g(h)=\lambda_g(h_1)+\cdots +\lambda_g(h_n)$. By
Lemma~\ref{orbits}, $G_i$ is invariant under the action of
$\mathcal{G}(X(A,B,I),r)$. Hence $\lambda_g(h_l)\in G_{i_l}$ for all
$l$. Since $h=\lambda_g(h)$, comparing their decompositions as sums
of element of the subgroups $G_{i_l}$, we have, by
Lemma~\ref{orbits}, that $\lambda_g(h_1)=h_1$. We know that the
additive group of $G(X(A,B,I),r)$ is free abelian with basis
$X(A,B,I)$. Thus, we may assume that
\begin{eqnarray}  \label{h1}
&&h_1=n_1(a_1,b_1,i_1)+\dots + n_m(a_m,b_m,i_1),
\end{eqnarray}
where $(a_1,b_1,i_1),\dots ,(a_m,b_m,i_1)$ are $m$ distinct elements
of $X(A,B,I)$ and $\sum_{l=1}^{m}n_l$ $=0$. By the hypothesis,
$\varphi_1(A)$ generates $B$, so for every $l$ there exist
$c_1,\dots ,c_s\in A$ and $z_1,\dots z_s\in \mathbb{Z}$ such that
$b_l-b_1=z_1\varphi_1(c_1)+\dots +z_s\varphi_1(c_s)$. Let
\begin{eqnarray*}f&=&(a_l,b_l,i_1)^{-z_1-\ldots -z_s}(a_l+c_1,0,i_1)^{z_1}\cdots
(a_l+c_s,0,i_1)^{z_s}\\
&&\cdot
(0,0,i_2)^{-1}(0,\varphi_2^{-1}(a_l-a_1),i_2).\end{eqnarray*}
 We have that $f\in H$.  Hence, by the above, $\lambda_f(h_1)=h_1$ and
\begin{eqnarray*}
\lefteqn{\lambda_f((a_1,b_1,i_1))}\\
&=&\sigma_{(a_l,b_l,i_1)}^{-z_1-\ldots
-z_s}\sigma_{(a_l+c_1,0,i_1)}^{z_1}\cdots
\sigma_{(a_l+c_s,0,i_1)}^{z_s}
\sigma_{(0,0,i_2)}^{-1}\sigma_{(0,\varphi_2^{-1}(a_l-a_1),i_2)}((a_1,b_1,i_1))\\
&=&\sigma_{(a_l,b_l,i_1)}^{-z_1-\ldots -z_s}\sigma_{(a_l+c_1,0,i_1)}^{z_1}\cdots \sigma_{(a_l+c_s,0,i_1)}^{z_s}((a_l,b_1,i_1))\\
&=&\sigma_{(a_l,b_l,i_1)}^{-z_1-\ldots -z_s}((a_l,b_l,i_1))\\
&=&(a_l, b_l,i_1).
\end{eqnarray*}
Hence, from (\ref{h1}) we get that $n_1=n_l$, for all $l=1,\dots
,m$. Since $\sum_{l=1}^{m}n_l=0$, it follows that $n_l=0$ for all
$l$, and thus $h_1=1$. Similarly one can prove that $h_2=\dots
=h_n=1$, and therefore $h=1$, as desired.
\end{proof}

The assertion of Theorem~\ref{poly} also seems to be of interest
from the point of view of the Kaplansky conjecture on non-existence
of nontrivial units in group algebras of torsion-free groups. The
structure groups $G(X(A,B,I),r)$ provide some natural nontrivial
examples for testing this conjecture.


\section*{Acknowledgments}
The two first-named authors were partially
supported by the grants DGI MICIIN MTM2011-28992-C02-01, and MINECO
MTM2014-53644-P. The third author is supported in part by
Onderzoeksraad of Vrije Universiteit Brussel and Fonds voor
Wetenschappelijk Onderzoek (Belgium). The fourth author is supported
by the National Science Centre grant 2013/09/B/ST1/04408 (Poland).

\vspace{30pt}

 \noindent \begin{tabular}{llllllll}
 D. Bachiller && F. Ced\'o  \\
 Departament de Matem\`atiques &&  Departament de Matem\`atiques \\
 Universitat Aut\`onoma de Barcelona &&  Universitat Aut\`onoma de Barcelona  \\
08193 Bellaterra (Barcelona), Spain    && 08193 Bellaterra (Barcelona), Spain \\
 dbachiller@mat.uab.cat &&  cedo@mat.uab.cat\\
   &&   \\
E. Jespers && J. Okni\'{n}ski  \\ Department of Mathematics &&
Institute of Mathematics
\\  Vrije Universiteit Brussel && Warsaw University \\
Pleinlaan 2, 1050 Brussel, Belgium && Banacha 2, 02-097 Warsaw, Poland\\
efjesper@vub.ac.be&& okninski@mimuw.edu.pl
\end{tabular}

\end{document}